\documentclass[12pt]{article}

\usepackage[T1]{fontenc}
\usepackage{lmodern}
\usepackage{amsmath,amssymb,amsfonts,amsthm,mathtools}
\usepackage{enumitem}
\usepackage[pdfauthor={Chunqiu Fang and Rongxing Xu},
  pdftitle={Codegree Thresholds for lambda-Choosability of Graphs},
  pdfstartview=XYZ,bookmarks=true,colorlinks=true,
  linkcolor=blue,urlcolor=blue,citecolor=blue,
  linktocpage=true,hyperindex=true]{hyperref}

\allowdisplaybreaks
\numberwithin{equation}{section}
\setlist[enumerate]{leftmargin=2.35em,itemsep=0.2em,topsep=0.3em}

\newtheorem{theorem}{Theorem}[section]
\newtheorem{lemma}[theorem]{Lemma}
\newtheorem{proposition}[theorem]{Proposition}
\newtheorem{corollary}[theorem]{Corollary}
\newtheorem{definition}[theorem]{Definition}

\theoremstyle{definition}

\newcommand{\Prob}{\mathbb{P}}
\newcommand{\Ex}{\mathbb{E}}
\newcommand{\Use}{\operatorname{Use}}
\newcommand{\Bin}{\operatorname{Bin}}

\newcommand{\ee}{\mathrm{e}}
\newcommand{\equalparts}[2]{\{#1\star #2\}}

\title{\large{\bf Codegree Thresholds for $\lambda$-Choosability of Graphs}}
\author{
  Chunqiu Fang \thanks{School of Computer Science and Technology, Dongguan University of Technology, Dongguan, Guangdong, 523808, China. Email: \texttt{fcq15@tsinghua.org.cn}}
  \quad
  Rongxing Xu\thanks{School of Mathematical Sciences, Zhejiang Normal University, Jinhua, Zhejiang, 321000, China. Email: \texttt{xurongxing@zjnu.edu.cn}}
}

\date{\today}

\begin{document}
\maketitle
\begin{abstract}
Let $\lambda = \{k_1,\ldots,k_q\}$ be a partition, and let
$|\lambda| = k_1 + \cdots + k_q$ be its size. A $|\lambda|$-list assignment $L$ of
a graph $G$ is a $\lambda$-assignment if the color set
$\bigcup_{v \in V(G)}L(v)$ can be partitioned into $q$ pairwise disjoint sets
$X_1,\ldots,X_q$ such that
$|L(v) \cap X_i| = k_i$ for every vertex $v$ and every
$i = 1,\ldots,q$. This notion,
introduced by Zhu [J. Combin. Theory Ser. B, 2020], puts ordinary coloring and
list coloring in the same framework. The case
$\lambda = \{1,\ldots,1\}$ is ordinary $q$-colorability, while the
case $\lambda = \{k\}$ is ordinary $k$-choosability. A classical theorem of
Alon [Random
Structures Algorithms, 2000] states that every graph with minimum degree $d$
has choice number at least $(1/2 - o(1))\log_2 d$. The constant $1/2$ was later
improved by Saxton and Thomason [Invent. Math., 2015], using the hypergraph
container method, to the sharp constant $1$. It is natural to ask whether a similar
phenomenon holds for $\lambda$-choosability for an arbitrary fixed partition
$\lambda$. If one only assumes large minimum degree, then the answer is negative.
Indeed, balanced complete bipartite graphs have arbitrarily large minimum
degree but are always $\{1,1\}$-choosable. Nevertheless, we show that the minimum
$q$-codegree, defined for a graph $G$ with $|V(G)|\geq q$ by
$\delta_q(G) = \min\{|N_G(S)| : S \subseteq V(G),\ |S| = q\}$, is the
appropriate replacement for minimum degree in this setting.

We prove that for every partition
$\lambda = \{k_1,\ldots,k_q\}$ there exists an integer $d$ such that every graph
$G$ with $\delta_q(G) \geq d$ is not $\lambda$-choosable. Let $f(\lambda)$
be the least such $d$. For every fixed $q$, we also obtain
$f(\lambda)\leq 2^{(2q+o(1))|\lambda|}$ as $|\lambda|\to\infty$. For every
$\lambda$, we prove that
$f(\lambda)\geq \frac{1}{q+1}(1+1/q)^{|\lambda|}$. For equal-part partitions, we
then determine the threshold asymptotically. Let
$\equalparts{k}{q}$ denote the partition with $q$ parts all equal to $k$, and let
$\rho_q$ be the unique $x\in(0,1)$ satisfying
$x=(1-x)^q$. We prove that for every fixed $q$,
$f(\equalparts{k}{q})=\rho_q^{-(1+o(1))k}$ as $k\to\infty$.
For a graph $G$, let its $q$-partition choice number
$\operatorname{ch}^{(q)}(G)$ be the least $k$ such that $G$ is
$\equalparts{k}{q}$-choosable. We further prove that for every fixed
$q$, 
\[
\operatorname{ch}^{(q)}(G)\geq
\left(\frac{1}{\ln(1/\rho_q)}-o(1)\right)\ln\delta_q(G)
\] 
as $\delta_q(G)\to\infty$, and the
coefficient $1/\ln(1/\rho_q)$ is asymptotically best possible. When $q=1$,
this becomes $\operatorname{ch}(G)\geq(1-o(1))\log_2\delta(G)$.
\end{abstract}


\section{Introduction}

All graphs in this paper are finite and simple. We refer to \cite{BondyMurty2008} for graph-theoretic terminology and notation not defined here. For a positive integer $k$, let $[k]=\{1,2,\ldots,k\}$.

List coloring was introduced independently by Vizing~\cite{Vizing1976} and
Erd\H{o}s, Rubin and Taylor~\cite{ERT1979}. A list assignment $L$ of a graph
$G$ assigns a set $L(v)$ of colors to each vertex $v$. An $L$-coloring is a
proper coloring $\phi$ such that $\phi(v) \in L(v)$ for every vertex $v$. For a
positive integer $k$, the graph $G$ is $k$-choosable if it has an $L$-coloring
for every list assignment $L$ with $|L(v)| = k$ for every vertex $v$. The choice
number $\operatorname{ch}(G)$ is the minimum such $k$.

The chromatic number and the choice number can be very different. The complete
bipartite graph $K_{n,n}$ is 2-colorable, while
$\operatorname{ch}(K_{n,n}) = (1 + o(1))\log_2 n$ as $n \to \infty$
\cite{ERT1979}. Alon~\cite{Alon1992} proved more generally that the choice
number of the complete $r$-partite graph with $n$ vertices in each part has
order $r\log_2 n$. He later proved that every graph with minimum degree $d$ has choice
number at least $(1/2 - o(1))\log_2 d$~\cite{Alon2000}. Saxton and
Thomason~\cite{ST2015} introduced the hypergraph container method and used it
to replace the constant $1/2$ by the best possible constant $1$. The same
method was introduced independently by Balogh, Morris and
Samotij~\cite{BMS2015}. Thus the choice number tends to infinity with the
minimum degree, although the chromatic number need not do so.

Several weaker forms of list coloring have also been studied.
Kratochv{\'\i}l, Tuza and Voigt~\cite{KTV1998Complexity} introduced separation
choosability, in which the lists of adjacent vertices are required to have
only a small intersection. Kr\'al' and Sgall~\cite{KralSgall2005} introduced
list coloring with a bounded palette, in which all lists use a common palette
of bounded size. For Alon-type results on these variants, especially for
bipartite graphs and graphs of large minimum degree, see
\cite{KTV1998Brooks,FKK2014,EKT2019,Kang2013,BonamyKang2017}.

In this paper, we consider another variant of list coloring, called
$\lambda$-choosability, which was introduced recently by
Zhu~\cite{Zhu2020}. Let
$\lambda = \{k_1,\ldots,k_q\}$ be a partition, where the $k_i$ need not be distinct, and let $|\lambda| = k_1 + \cdots + k_q$. A
$|\lambda|$-list assignment $L$ is a $\lambda$-assignment if its color set can
be partitioned into $q$ pairwise disjoint sets $X_1,\ldots,X_q$ such that
$|L(v) \cap X_i| = k_i$ for every vertex $v$ and every $i \in [q]$. The graph $G$ is
$\lambda$-choosable if it is $L$-colorable for every $\lambda$-assignment
$L$. This notion puts ordinary coloring and list coloring in the same
framework. For positive integers $a$ and $b$, let $\{a \star b\}$ denote the
partition with $b$ parts, all equal to $a$. The cases $\{k\}$ and
$\{1 \star k\}$ are ordinary $k$-choosability and ordinary
$k$-colorability, respectively. This notion has been studied extensively in recent years. See
\cite{GuZhu2023,ZhuZhu2021,GuJiangWoodZhu2023,ZhuZhu2025} for recent work.

It is natural to ask whether an analogue of Alon's result holds for
$\lambda$-choosability. For a fixed partition $\lambda$, does every graph of
sufficiently large minimum degree fail to be $\lambda$-choosable? The answer
is negative when $\lambda$ has more than one part. For example, $K_{n,n}$ has
minimum degree $n$ and is $\{1,1\}$-choosable. More generally, if $\lambda$
has $q \geq 2$ parts, then the complete $q$-partite graph with $n$ vertices in
each part has minimum degree $(q - 1)n$ and is $q$-colorable, and hence
$\lambda$-choosable. Thus a graph may be
$\lambda$-choosable even when its minimum degree is arbitrarily large. We
prove that a graph is not $\lambda$-choosable if every set of
$q$ vertices has sufficiently many common neighbors.

Formally, for a graph $G$ and a set $S \subseteq V(G)$, let
$N_G(S) = \bigcap_{v \in S} N_G(v)$, where
$N_G(\varnothing)=V(G)$. For an integer $t$ with
$1\leq t\leq |V(G)|$, the
\emph{minimum $t$-codegree} of $G$ is defined by
\[
 \delta_t(G)
 = \min\bigl\{|N_G(S)| : S \subseteq V(G),\ |S|=t\bigr\}.
\]
When the graph is clear, we use $N(S)$ for $N_G(S)$. If $|S| \leq t$, then
$|N_G(S)| \geq \delta_t(G)$.

For a partition $\lambda=\{k_1,\ldots,k_q\}$, let $f(\lambda)$ be the least
integer $d$ such that every graph $G$ with $|V(G)|\geq q$ and
$\delta_q(G)\geq d$ is not $\lambda$-choosable, and let
$f(\lambda)=\infty$ if no such integer exists.
Our first main result is the following.

\begin{theorem}\label{thm:existence}
Let $\lambda=\{k_1,\ldots,k_q\}$ be a partition. There exists an integer
$D_\lambda$ such that every graph $G$ with $\delta_q(G)\geq D_\lambda$
is not $\lambda$-choosable, that is, $f(\lambda)<\infty$. Moreover,
$f(\lambda)\leq 2^{(2q+o(1))|\lambda|}$  as $|\lambda|\to\infty$
whenever $q$ is fixed.
\end{theorem}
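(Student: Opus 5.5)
The proof will adapt Alon's random-list argument for minimum degree. The extra idea is that the $q$ parts of $\lambda$ must be handled by a nested family of common neighbourhoods rather than a single neighbourhood.

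Fix $\lambda=\{k_1,\ldots,k_q\}$ and take disjoint palettes $X_1,\ldots,X_q$ with $|X_i|=s_i:=2k_i$. Every list $L(v)$ is a uniformly random $k_i$-subset of $X_i$ for each $i$, chosen independently over vertices and parts, so every such $L$ is a $\lambda$-assignment. A vertex $v$ is \emph{blocked} by a partial colouring $\phi$ if $L(v)\subseteq \phi(N(v)\cap \operatorname{dom}\phi)$. If every extension of every partial colouring of some small set to a suitable larger set blocks some vertex, then $G$ is not $L$-colourable. Whenever $|\phi(N(v))\cap X_i|\ge s_i/2$ for every $i$, the probability that $v$ is blocked is at least
\[\prod_i\binom{s_i/2}{k_i}\Big/\binom{s_i}{k_i}\ge 2^{-(2+o(1))|\lambda|}.\]

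The main difficulty, and the reason minimum degree is not enough, is that a partial colouring may use only colours of one part, say $X_1$. Then no vertex sees colours from $X_2,\dots,X_q$, and nothing is ever blocked. To rule this out, the plan is to build $q+1$ layers $A_1,\ldots,A_{q+1}$ using the codegree hypothesis.

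First choose $A_1$ to be a random set of $a$ vertices. Given the lists on $A_1$ and a colouring $\phi_1$ of $A_1$, pigeonhole gives some part $X_{i_1}$ that receives many colour classes. Then pass to common neighbourhoods $N(S)$ of sets $S\subseteq A_1$ with $|S|\le q$ that see at least $s_{i_1}/2$ colours of $X_{i_1}$; each such neighbourhood has at least $d$ vertices. Vertices there must avoid those colours, so, with the random lists, a positive fraction of them are forced into the remaining parts. Choose $A_2$ as a random $a$-subset of such a common neighbourhood and repeat. Each round certifies half of one new part. After at most $q$ rounds we reach a set $S^\ast$ of at most $q$ vertices whose colours cover half of every $X_i$; $S^\ast$ is built from at most $q$ representatives, one per round. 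Its common neighbourhood has size at least $d$, and each vertex in it is blocked independently with probability at least $2^{-(2+o(1))|\lambda|}$.

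The counting then goes as in Alon's proof. The number of colourings of the layers is at most $|\lambda|^{qa}$, and a fixed one fails to block anything with probability at most $\bigl(1-2^{-(2+o(1))|\lambda|}\bigr)^{d'}$, where $d'$ is the number of common neighbours still available. The union bound is below $1$ once $d'\ge 2^{(2+o(1))|\lambda|}\,qa\log|\lambda|$. The layers must be nested, so each round multiplies the required size by another factor $2^{(2+o(1))|\lambda|}$. With $q$ such factors, together with $a=2^{O(|\lambda|)}$ chosen just large enough for the pigeonhole and concentration steps, this gives $D_\lambda=2^{(2q+o(1))|\lambda|}$ for fixed $q$. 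The existence statement $f(\lambda)<\infty$ follows from the same argument without tracking constants.

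I expect the main obstacle to be making the forcing step rigorous. Each vertex in the current common neighbourhood must be shown, with high probability over its own list, to be pushed away from the parts already certified. This must hold uniformly over all colourings of the earlier layers, which requires a careful union bound. The representatives must also be chosen so that $S^\ast$ never exceeds size $q$, since otherwise the codegree hypothesis gives nothing. If the direct round-by-round approach fails, I would first try to phrase it as an induction on the number of parts still uncovered: prove that a graph whose $j$-codegree is large cannot be coloured from $\lambda$-lists while avoiding a prescribed half of $j$ chosen parts, and then induct on $j$.
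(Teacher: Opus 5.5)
There is a genuine gap at the core of the construction. You assert that after at most $q$ rounds you reach a set $S^\ast$ with $|S^\ast|\le q$ whose colours cover half of every $X_i$, so that each vertex of $N(S^\ast)$ is blocked with probability at least $2^{-(2+o(1))|\lambda|}$. But $q$ vertices carry at most $q$ colours, whereas half of all the palettes consists of $\sum_i k_i=|\lambda|$ colours. A vertex $v\in N(S^\ast)$ is guaranteed to be adjacent only to $S^\ast$, so its coloured neighbourhood need only contain $q$ colours and cannot contain $L(v)$ once $|\lambda|>q$. The same problem already appears in the first round: a set $S\subseteq A_1$ with $|S|\le q$ cannot see $s_{i_1}/2=k_{i_1}$ colours of $X_{i_1}$ when $k_{i_1}>q$, and the only colours that all of $N(S)$ must avoid are those on $S$.

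Two further steps also fail. First, pigeonhole over vertices gives no lower bound on the number of \emph{distinct} colours a part receives; a colouring of an independent $A_1$ may use a single colour. Second, choosing $A_2$ inside a neighbourhood that depends on $\phi_1$ does not fit a union bound over colourings, because vertices whose lists must stay unexposed for one colouring are layer vertices for another. Since the codegree hypothesis controls only common neighbourhoods of $q$-sets, the colours that block $v$ have to be carried by many \emph{other} vertices of $N(S)\cap N(v)$. You then need a mechanism forcing \emph{every} colouring to use more than half of each remaining pool on those vertices, and your plan contains no such mechanism.

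This is exactly what the paper's recursive blocking structure provides. All layers $B_I$ are sampled in advance with density $\Theta(d^{-1/2})$. The blocking event at a state $(S,I,\mathbf P_I)$ with $|S|=|I|$ says the following: for every proper colouring $\phi$ of $G[\hat B_I\cap N(S)]$ avoiding the prescribed half-sets $\mathbf P_I$, there is $u\in B_I\cap N(S)$ with $L_i(u)\subseteq P_i$ for $i\in I$, and with $L_j(u)$ contained in the colours $\phi$ uses on $N(u)$ for $j\notin I$. A vertex $u$ is surrounded if all extensions $(S\cup\{u\},I\cup\{j\},\cdot)$, over all $j\notin I$ and all half-sets $P_j$, are blocking. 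Lemma~\ref{lem:recursive-hit} then shows by contradiction that, for such $u$ and any admissible colouring, the colours used around $u$ meet every half-set of every remaining pool. So $S$ never carries colours itself; it only indexes the common neighbourhood in which the next layer lives, and it gains one vertex per prescribed part. The blocking probabilities are proved by induction on $q-|I|$ with a three-stage exposure.

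Your fallback (avoid prescribed halves of some parts and induct) points in this direction, but it needs two features you have not supplied. Prescribing half-sets in $m$ parts must go together with working inside $N(S)$ for an $m$-set $S$, where only the $(q-m)$-codegree of $G[N(S)]$ is available. And the statement must hold simultaneously for all half-set tuples and all colourings of the already exposed layers, while the candidate vertices' lists remain unexposed.

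Finally, your exponent count rests on the broken scheme. In the paper, the exponent $2q$ comes from the layer density $d^{-1/2}$ together with palettes of size $k_i^2$. These keep the relevant probability products above $2^{-q-|\lambda|}$, so that $\sqrt d\ge 2^{(q+o(1))|\lambda|}$ suffices.
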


For positive integers $r$ and $n$, let $T_r(n)$ denote the complete
$r$-partite graph with $n$ vertices in each part. The $q$-codegree in
Theorem~\ref{thm:existence} cannot be replaced by $p$-codegree for any $p < q$.
Indeed, the graph $T_q(n)$ is
$q$-colorable and hence $\lambda$-choosable for every partition $\lambda$ with
$q$ parts, while $\delta_p(T_q(n)) = (q - p)n$ for $p < q$.
For each positive integer $q$, let $\rho_q$ denote the unique $x\in(0,1)$
satisfying $x=(1-x)^q$. This number is well defined since
$(1-x)^q-x$ is strictly decreasing from $1$ to $-1$ on $[0,1]$.
We obtain the following two lower bounds from complete $(q+1)$-partite graphs.

\begin{proposition}\label{prop:lower-threshold}
Let $\lambda=\{k_1,\ldots,k_q\}$ be a partition. If $N_1$ is a positive integer satisfying
$N_1(q+1)(q/(q+1))^{|\lambda|}<1$,
then $T_{q+1}(N_1)$ is $\lambda$-choosable. Consequently,
\[
f(\lambda)\geq\left\lceil(q+1)^{-1}(1+\frac{1}{q})^{|\lambda|}\right\rceil.
\]
Moreover, if $k$, $q$ and $N_2$ are positive integers satisfying
$(q+1)N_2\rho_q^k<1$,
then $T_{q+1}(N_2)$ is $\equalparts{k}{q}$-choosable. Consequently,
\[
f(\equalparts{k}{q})\geq\left\lceil \frac{\rho_q^{-k}}{(q+1)}\right\rceil.
\]
\end{proposition}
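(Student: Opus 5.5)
The plan is to give an explicit random strategy for coloring $T_{q+1}(N)$ from any $\lambda$-assignment, and then use a union bound. Let $V_1,\ldots,V_{q+1}$ be the parts of $T_{q+1}(N)$, and let $L$ be a $\lambda$-assignment with color classes $X_1,\ldots,X_q$. We randomly map each color $c$ to a part $\pi(c)\in[q+1]$, independently over colors. A vertex $v\in V_j$ may only use colors $c\in L(v)$ with $\pi(c)=j$. If every vertex has at least one such color, we color $v$ with any one of them. This coloring is proper. Two adjacent vertices lie in different parts $V_j\neq V_{j'}$, so their colors have different images under $\pi$ and are therefore distinct. Non-adjacent vertices (same part) impose no constraint. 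Hence $T_{q+1}(N)$ is $L$-colorable as soon as, with positive probability, no vertex is ``bad''. Here $v\in V_j$ is bad if $\pi(c)\neq j$ for all $c\in L(v)$. There are $(q+1)N$ vertices, so it suffices to show that each vertex is bad with probability less than $1/((q+1)N)$.

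For the first statement, take $\pi(c)$ uniform on $[q+1]$. A vertex is bad exactly when all $|\lambda|$ colors of its list avoid one specific part, which happens with probability $(q/(q+1))^{|\lambda|}$. The union bound then gives failure probability at most $N(q+1)(q/(q+1))^{|\lambda|}<1$. For the equal-part case $\equalparts{k}{q}$, we instead use the structure of the partition. For $c\in X_i$, set $\pi(c)=q+1$ with probability $x$ and $\pi(c)=i$ with probability $1-x$.
\begin{itemize}
\item A vertex $v\in V_i$ with $i\le q$ can only receive colors from $L(v)\cap X_i$, which has exactly $k$ elements. It is bad iff all of these are sent to $q+1$, with probability $x^k$.
\item A vertex in $V_{q+1}$ is bad iff all $qk$ of its colors are sent away from $q+1$, with probability $(1-x)^{qk}$.
\end{itemize}
Choosing $x=\rho_q$ makes both probabilities equal to $\rho_q^k$, since $(1-\rho_q)^q=\rho_q$. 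The union bound gives failure probability at most $(q+1)N\rho_q^k<1$.

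For the consequences, we need $\delta_q(T_{q+1}(N))=N$. Take a $q$-set $S$. If $S$ meets $q$ distinct parts, its common neighborhood is exactly the remaining part, which has size $N$. If $S$ meets fewer parts, then at least two parts are untouched and the common neighborhood is larger. So $\delta_q=N$, and $|V|\ge q$. Hence any admissible $N$ gives a $\lambda$-choosable graph with $\delta_q=N$, so $f(\lambda)\ge N+1$.

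Let $y=(q+1)^{-1}(1+1/q)^{|\lambda|}$. The hypothesis on $N_1$ is exactly $N_1<y$. Take $N_1=\lceil y\rceil-1$, which is the largest integer strictly below $y$. If this is positive, we get $f(\lambda)\ge\lceil y\rceil$. If it is $0$, the bound $f(\lambda)\ge 1=\lceil y\rceil$ is trivial. The same argument with $y=\rho_q^{-k}/(q+1)$ gives the bound for $f(\equalparts{k}{q})$.

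There is no serious obstacle. The only real idea is the biased, class-dependent choice of $\pi$ in the equal-part case, calibrated by the equation $x=(1-x)^q$ that defines $\rho_q$. Beyond that, care is needed only with the ceiling bookkeeping, in particular the case where $y$ is an integer.
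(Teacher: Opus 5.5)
Your proof is correct and matches the paper's argument essentially step for step. The paper uses the same uniform labeling for general $\lambda$, the same $\rho_q$-biased class-dependent labeling for $\equalparts{k}{q}$, and the same choice $N=\lceil y\rceil-1$. The one point you call trivial, $f(\lambda)\geq 1$ when $\lceil y\rceil-1=0$, needs a witness: the paper notes that $K_q$ has $|V(K_q)|=q$, $\delta_q(K_q)=0$, and is $\lambda$-choosable.
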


By Theorem~\ref{thm:existence} and the first part of
Proposition~\ref{prop:lower-threshold}, for every fixed $q$ we have
\[
 2^{(\log_2(1+1/q)-o(1))|\lambda|}
 \leq f(\lambda)
 \leq 2^{(2q+o(1))|\lambda|}
\]
as
$|\lambda|\to\infty$. By the equal-part case of
Proposition~\ref{prop:lower-threshold}, we obtain a stronger lower bound. We
then establish a matching upper bound.

\begin{theorem}\label{thm:equal-upper}
Let $q\geq 1$ be fixed. We have
$f(\equalparts{k}{q}) \leq\rho_q^{-(1+o(1))k}$ as $k\to\infty$.
\end{theorem}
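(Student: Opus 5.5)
The plan is to generalize the Saxton--Thomason/Alon two-round random argument to $\equalparts{k}{q}$-assignments, with the minimum $q$-codegree playing the role of the minimum degree. Fix $\varepsilon>0$, and let $G$ satisfy $\delta_q(G)\geq d:=\rho_q^{-(1+\varepsilon)k}$ with $k$ large. Take disjoint palettes $X_1,\ldots,X_q$, each of size $n=Ck$ for a large constant $C=C(\varepsilon,q)$. Every list will be the union of a uniformly random $k$-subset of each $X_i$. We then show that with positive probability $G$ has no $L$-coloring.

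\textbf{Round 1.} Choose a random set $A\subseteq V(G)$ of size $a=d^{\eta}$, for a small $\eta=\eta(\varepsilon)$, and give its vertices random lists. Any $L$-coloring $\phi$ of $A$ determines, for every $q$-subset $S\subseteq A$ and every $w\in N(S)$, the set $\phi(S)$ of colors forbidden at $w$ by $S$. The key definition is the \emph{covered profile}. For a vertex $w$ outside $A$, let $U_i(w)=X_i\cap\phi(N(w)\cap A)$, and put $x_i=|U_i(w)|/n$. Then $w$ is blocked, i.e.\ has no available color, exactly when $L(w)\cap X_i\subseteq U_i(w)$ for all $i$. For a random list this happens with probability about $\prod_i x_i^{k}$.

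\textbf{Round 2 and the balance defining $\rho_q$.} We must show that for every coloring $\phi$ of $A$ (there are at most $(qk)^{a}$ of them), many vertices $w$ have a large product $\prod_i x_i$. The $q$-codegree hypothesis enters here. A coloring can keep the profile small only by concentrating $A$ on few colors. However, $A$ contains vertices whose random lists avoid any prescribed set of colors. On the other hand, every $q$-set in $A$ has at least $d$ common neighbours, so some colors from every part are forced into the profile of many vertices $w$. Quantitatively, if each part has an uncovered fraction of $1-x$, the chance that a Round-1 vertex's list misses the covered colors in all $q$ parts simultaneously is $(1-x)^{qk/q\cdot}\!\approx(1-x)^{k}$ per part, i.e.\ $(1-x)^{qk}$ overall. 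Blocking a Round-2 vertex has probability $x^{qk}$. The adversary's best choice equalizes these two quantities, which gives $x=(1-x)^q$, i.e.\ $x=\rho_q$. This matches the lower-bound construction of Proposition~\ref{prop:lower-threshold}, and it is why $d=\rho_q^{-(1+\varepsilon)k}$ suffices. So Round 2 fixes a coloring $\phi$ of $A$ and uses the $\geq d$ common neighbours of suitable $q$-sets to find $\gg\rho_q^{-k}$ vertices $w$ with $\prod_i x_i\geq\rho_q^{1+\varepsilon/2}$. Each of these vertices is independently blocked with probability at least $\rho_q^{(1+\varepsilon/2)qk}$. A Chernoff bound then makes the failure probability smaller than $(qk)^{-a}$, and a union bound over all $\phi$ finishes the argument.

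\textbf{Main obstacle.} The hardest step is the structural lemma in Round 2: every coloring of $A$ yields many well-covered vertices. The difficulty is that $N(w)\cap A$ differs from vertex to vertex, so a single profile cannot be used for all $w$. I would first try a container-style reduction. Encode colorings of $A$ by the "fingerprint" of colors used on $q$-subsets, where the number of fingerprints is only $2^{o(d^{\eta})}$. I would also use the fact that a random $q$-subset $S$ of a color class set has $|N(S)|\geq d$, which lets us average over $w\in N(S)$. If this fails, I would fall back on a dependent random choice: pass to a subset $A'\subseteq A$ all of whose $q$-subsets have a common neighbourhood of size at least $d^{1-\eta}$. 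This makes all relevant $w$ see essentially the same part of $A$, and that loss only affects the $o(1)$ term in the exponent.
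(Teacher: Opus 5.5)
There is a genuine gap, and it sits at the step you flag as the main obstacle. The heuristic you offer for that step is also incorrect.

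Your Round~2 claim that the codegree condition forces ``some colors from every part'' into the profiles has no support in your argument. Nothing stops a proper coloring $\phi$ of $A$ from taking all its colors from $X_1$. This is possible, for example, whenever $\Delta(G[A])<k$: color greedily from $L(v)\cap X_1$. Then $x_2=\cdots=x_q=0$ for every $w$, and no vertex is ever blocked. More generally, an adversary can assign different pools to different vertices, which is exactly why $T_q(n)$ is $\lambda$-choosable. What rules this out is properness on $(q+1)$-cliques, which $\delta_q(G)\geq d$ supplies in abundance, and your plan never uses cliques.

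Cliques are also the true source of $\rho_q$. Let $a_j(u)$ be the fraction of the pool-$j$ containers that contain $u$, and suppose $\sum_{u\in K}a_j(u)\leq 1+\xi_j$ for every $j$ over a $(q+1)$-clique $K$. Lemma~\ref{lem:robust-array} then gives some $u\in K$ with $\prod_j(1-a_j(u))\geq\rho_q-\sum_j\xi_j$. The extremal clique has one vertex covered at fraction $1-\rho_q$ in all $q$ pools. For each $j$, it has another vertex covered at fraction $\rho_q$ in pool $j$ and completely in the other pools. So the relevant balance is $y^{qk}=(1-y)^{k}$, with solution $y=1-\rho_q$. Equalizing $x^{qk}$ with $(1-x)^{qk}$, as you do, gives $x=1/2$, not $\rho_q$.

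Your exponents are also inconsistent. The bound $\prod_i x_i\geq\rho_q^{1+\varepsilon/2}$ gives blocking probability about $\rho_q^{(1+\varepsilon/2)k}$, not $\rho_q^{(1+\varepsilon/2)qk}$. With the latter value, even all $d=\rho_q^{-(1+\varepsilon)k}$ common neighbours would give an expected number of blocked vertices tending to $0$ when $q\geq 2$.

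Even with the right local mechanism, a two-round scheme that union-bounds over all colorings of the first-round set loses a factor in the exponent. For $q=1$ it is Alon's argument, which gives only $(1/2-o(1))\log_2 d$: good vertices need exponentially many first-round neighbours, while the union bound costs $(qk)^{|A|}$. Taking $|A|=d^{\eta}$ makes the union bound cheap, but then a typical vertex may have essentially no neighbours in $A$, for instance in suitable random graphs with $n$ much larger than $d$.

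The paper overcomes both obstacles with containers, in the following steps.
\begin{enumerate}
\item It weights each edge $uv$ by the number of $(q+1)$-cliques containing it.
\item It proves $w(uv)\leq\min\{d_{(H,w)}(u),d_{(H,w)}(v)\}/(d-1)$ and $\mu_w(v)\leq 1/(d-1)$.
\item It applies the online weighted container Lemma~\ref{lem:online-container} to every color class.
\item It uses the small weighted edge count of the containers to show that most cliques have total excess at most $\gamma$.
\item Lemma~\ref{lem:robust-array} then yields a set $R(\mathbf C)$ of measure at least $(1-\gamma)/(q+1)$. At each vertex of $R(\mathbf C)$, the container tuple is rejected with probability at least $(\rho_q-2\gamma)^{\ell}$.
\item Lemma~\ref{lem:initial-segment} lets the union bound run only over restrictions of fingerprints to a prefix $V_h$.
\end{enumerate}
Your ``container-style'' fallback points in this direction, but it is too vague to assess and it omits the clique-and-array step that produces $\rho_q$. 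Your dependent-random-choice fallback targets large common neighbourhoods of $q$-subsets of $A$. The hypothesis already gives that property, and it is not what is missing.
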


By Theorem~\ref{thm:equal-upper} and the second part of
Proposition~\ref{prop:lower-threshold}, we obtain the asymptotic threshold.

\begin{corollary}\label{cor:equal-threshold}
For every fixed $q\geq 1$, $f(\equalparts{k}{q})=\rho_q^{-(1+o(1))k}$ as $k\to\infty$. 
\end{corollary}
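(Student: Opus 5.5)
The corollary follows by combining the upper bound of Theorem~\ref{thm:equal-upper} with the lower bound from the second part of Proposition~\ref{prop:lower-threshold}. The only work is to check that both bounds have the form $\rho_q^{-(1+o(1))k}$ as $k\to\infty$ with $q$ fixed.

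For the upper bound, Theorem~\ref{thm:equal-upper} gives directly
\[
f(\equalparts{k}{q})\leq\rho_q^{-(1+\varepsilon_1(k))k}
\]
for some function $\varepsilon_1(k)\to0$.

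For the lower bound, Proposition~\ref{prop:lower-threshold} gives
\[
f(\equalparts{k}{q})\geq\left\lceil \frac{\rho_q^{-k}}{q+1}\right\rceil\geq \frac{\rho_q^{-k}}{q+1}.
\]
To put this in the required form, set
\[
\varepsilon_2(k)=-\frac{\ln(q+1)}{k\ln(1/\rho_q)}.
\]
Then $\rho_q^{-k}/(q+1)=\rho_q^{-(1+\varepsilon_2(k))k}$. Since $\rho_q\in(0,1)$ is a constant depending only on $q$, we have $\ln(1/\rho_q)>0$, and therefore $\varepsilon_2(k)\to0$ as $k\to\infty$.

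Now define $\varepsilon(k)$ by
\[
f(\equalparts{k}{q})=\rho_q^{-(1+\varepsilon(k))k}.
\]
This is possible because $f(\equalparts{k}{q})\geq1$ and is finite by Theorem~\ref{thm:existence}. Taking logarithms of the two bounds gives $\varepsilon_2(k)\leq\varepsilon(k)\leq\varepsilon_1(k)$. Both outer functions tend to $0$, so $\varepsilon(k)\to0$, which is exactly the claim.

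There is no real obstacle here. The one point needing care is that $q$ is fixed, so the factor $q+1$ is a constant and is absorbed into the $o(1)$ in the exponent.
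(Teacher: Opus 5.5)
Your proposal is correct and matches the paper's argument: the corollary follows by sandwiching $f(\equalparts{k}{q})$ between the upper bound of Theorem~\ref{thm:equal-upper} and the lower bound $\lceil\rho_q^{-k}/(q+1)\rceil$ from the second part of Proposition~\ref{prop:lower-threshold}. The paper states this in one line, so your explicit check that the constant factor $1/(q+1)$ is absorbed into the $o(1)$ in the exponent is simply a spelled-out version of what it leaves implicit.
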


We also use the equal-part result to study a corresponding choice parameter.
For a graph $G$, let its \emph{$q$-partition choice number} be
\[
 \operatorname{ch}^{(q)}(G)
 =\min\{k:G\text{ is }\equalparts{k}{q}\text{-choosable}\}.
\]
Thus $\operatorname{ch}^{(1)}(G)=\operatorname{ch}(G)$, and
$\operatorname{ch}^{(q)}(G)=1$ if and only if $G$ is $q$-colorable.

\begin{corollary}\label{cor:partition-choice}
For every fixed $q\geq1$,
\[
 \operatorname{ch}^{(q)}(G)\geq
 \left(\frac{1}{\ln(1/\rho_q)}-o(1)\right)\ln\delta_q(G)
\]
as $\delta_q(G)\to\infty$. The coefficient
$1/\ln(1/\rho_q)$ is asymptotically best possible.
\end{corollary}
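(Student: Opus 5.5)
The plan is to derive both parts directly from Theorem~\ref{thm:equal-upper} and the second part of Proposition~\ref{prop:lower-threshold}, which is exactly what Corollary~\ref{cor:equal-threshold} packages.

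\emph{Lower bound.} Fix $q$ and $\varepsilon>0$. By Theorem~\ref{thm:equal-upper} there is $k_0$ such that $f(\equalparts{k}{q})\leq \rho_q^{-(1+\varepsilon)k}$ for all $k\geq k_0$. Write $c=\ln(1/\rho_q)>0$. Given $G$ with $\delta_q(G)=d$ large, define
\[
k=\left\lfloor \frac{\ln d}{(1+\varepsilon)c}\right\rfloor .
\]
For $d$ large we have $k\geq k_0$, and $\rho_q^{-(1+\varepsilon)k}=\ee^{(1+\varepsilon)ck}\leq d$. Hence $\delta_q(G)\geq f(\equalparts{k}{q})$, so $G$ is not $\equalparts{k}{q}$-choosable.

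Next we need monotonicity: if $G$ is not $\equalparts{k}{q}$-choosable, then it is not $\equalparts{k'}{q}$-choosable for any $k'\leq k$. To see this, take a bad $\equalparts{k}{q}$-assignment $L$ with parts $X_1,\ldots,X_q$. For each vertex, shrink each $L(v)\cap X_i$ to any $k'$-subset. The result is an $\equalparts{k'}{q}$-assignment with the same parts, and any coloring from it would be an $L$-coloring, which is impossible. Therefore $\operatorname{ch}^{(q)}(G)\geq k+1>\frac{\ln d}{(1+\varepsilon)c}$. Since $\varepsilon$ is arbitrary, this gives the bound $(1/c-o(1))\ln\delta_q(G)$.

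\emph{Sharpness.} Use $T_{q+1}(N)$, which satisfies $\delta_q(T_{q+1}(N))=N$: a $q$-set meets at most $q$ parts and always has a fully common-neighbour part of size $N$, with equality when the set meets $q$ distinct parts. For a given $N$, let $k$ be the least integer with $(q+1)N\rho_q^k<1$, that is,
\[
k=\left\lfloor \frac{\ln((q+1)N)}{c}\right\rfloor+1 .
\]
By Proposition~\ref{prop:lower-threshold}, $T_{q+1}(N)$ is $\equalparts{k}{q}$-choosable, so
\[
\operatorname{ch}^{(q)}(T_{q+1}(N))\leq \frac{\ln N}{c}+O(1)=\left(\frac1c+o(1)\right)\ln\delta_q .
\]
Thus no coefficient larger than $1/c$ works.

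The only mildly delicate points are the monotonicity step and the codegree computation for $T_{q+1}(N)$; both are routine. The main content lies in Theorem~\ref{thm:equal-upper}, which is assumed here.
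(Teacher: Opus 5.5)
Your proof is correct and is essentially the paper's argument: you take $k$ just below $\ln\delta_q(G)/\ln(1/\rho_q)$ so that the upper bound on $f(\equalparts{k}{q})$ from Theorem~\ref{thm:equal-upper} (equivalently Corollary~\ref{cor:equal-threshold}) forces non-choosability, and you use $T_{q+1}(N)$ with the second part of Proposition~\ref{prop:lower-threshold} for sharpness; that you parametrize the sharpness example by $N$ rather than by $k$ makes no difference. Your explicit monotonicity argument, that failing $\equalparts{k}{q}$-choosability implies failing $\equalparts{k'}{q}$-choosability for every $k'\le k$, is a welcome addition, since the paper uses this step without comment when it concludes $\operatorname{ch}^{(q)}(G)\geq k+1$.
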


\begin{proof}
Let $d=\delta_q(G)$ and fix a sufficiently small $\varepsilon>0$. Take
$k=\lfloor(1-\varepsilon)\ln d/\ln(1/\rho_q)\rfloor$.
Then $k\to\infty$ as $d\to\infty$. By Corollary~\ref{cor:equal-threshold}, $f(\equalparts{k}{q})\leq \rho_q^{-(1+\varepsilon/2)k}$ for sufficiently large $d$.
Since $k\ln(1/\rho_q)\leq(1-\varepsilon)\ln d$, we have
\[
 \begin{aligned}
 \ln f(\equalparts{k}{q}) \leq \left(1+\frac{\varepsilon}{2}\right)
 k\ln(1/\rho_q) \leq \left(1+\frac{\varepsilon}{2}\right)
 (1-\varepsilon)\ln d
 <\ln d.
 \end{aligned}
\] 
Thus $f(\equalparts{k}{q})<d$, so $G$ is not
$\equalparts{k}{q}$-choosable. It follows that
\[
 \operatorname{ch}^{(q)}(G)\geq k+1
 >(1-\varepsilon)\frac{\ln d}{\ln(1/\rho_q)}.
\]
The lower bound follows since $\varepsilon$ can be chosen arbitrarily small.

To see that the coefficient is best possible, let
$N=\lceil\rho_q^{-k}/(q+1)\rceil-1$ and let $G=T_{q+1}(N)$.
By the definition of $N$, we have $(q+1)N\rho_q^k<1$ whenever $N$ is positive,
and $N$ is positive for all sufficiently large $k$.
By Proposition~\ref{prop:lower-threshold}, the graph $G$ is
$\equalparts{k}{q}$-choosable. Hence
$\operatorname{ch}^{(q)}(G)\leq k$, while $\delta_q(G)=N$. Since
$\ln N=(1+o(1))k\ln(1/\rho_q)$,
\[
 \operatorname{ch}^{(q)}(G)
 \leq\left(\frac{1}{\ln(1/\rho_q)}+o(1)\right)
 \ln\delta_q(G).
\]
The coefficient is therefore asymptotically best possible.
\end{proof}

When $q=1$, we have $\rho_1=1/2$,
$\operatorname{ch}^{(1)}(G)=\operatorname{ch}(G)$, and
$\delta_1(G)=\delta(G)$. Corollary~\ref{cor:partition-choice} therefore
becomes $\operatorname{ch}(G)\geq(1-o(1))\log_2\delta(G)$. By the standard
minimum-degree subgraph argument, every graph $G$ of average degree $d$
contains a subgraph $H$ with $\delta(H)\geq d/2$. Hence
$\operatorname{ch}(G)\geq\operatorname{ch}(H)
\geq(1-o(1))\log_2(d/2)=(1-o(1))\log_2d$. This is the asymptotically sharp
lower bound in terms of average degree proved by Saxton and
Thomason~\cite{ST2015}.

The rest of the paper is organized as follows. In
Section~\ref{sec:complete-multipartite}, we prove
Proposition~\ref{prop:lower-threshold}. In Section~\ref{sec:surrounded}, we
prove Theorem~\ref{thm:existence}.
In Section~\ref{sec:partition}, we prove Theorem~\ref{thm:equal-upper}.
 
\section{Proof of Proposition~\ref{prop:lower-threshold}}
\label{sec:complete-multipartite}
 
For every positive integer $N$, we have $\delta_q(T_{q+1}(N))=N$.
In each of the two cases below, let $V_0,V_1,\ldots,V_q$ denote the parts of
the complete $(q+1)$-partite graph under consideration.

We use the following observation in both cases. Suppose that the colors are
labeled by $0,1,\ldots,q$ and that every vertex in $V_j$ has a color in its
list labeled $j$. Color each vertex with one such color. A color has only one
label, so vertices in different parts cannot receive the same color. Thus the
coloring is proper.

We first consider the general partition $\lambda$. Let $L$ be an arbitrary
$\lambda$-assignment of $T_{q+1}(N_1)$, and label every color independently
and uniformly from $\{0,1,\ldots,q\}$. For a vertex $v\in V_j$, the
probability that $L(v)$ contains no color labeled $j$ is
$(q/(q+1))^{|\lambda|}$.
Hence the expected number of vertices having no color labeled with the index
of their part is
\[
 (q+1)N_1\left(\frac{q}{q+1}\right)^{|\lambda|}<1.
\]
Some labeling leaves no such vertex. By the preceding observation, we obtain
a proper $L$-coloring. Thus $T_{q+1}(N_1)$ is $\lambda$-choosable.

To obtain the stated lower bound for $f(\lambda)$, take
\[
 N_1=
 \left\lceil
 \frac{1}{q+1}\left(1+\frac1q\right)^{|\lambda|}
 \right\rceil-1
\]
whenever this integer is positive. This value satisfies the strict inequality
in the proposition, and the graph above has $q$-codegree $N_1$. Hence
\[
 f(\lambda)\geq N_1+1
 =\left\lceil
 \frac{1}{q+1}\left(1+\frac1q\right)^{|\lambda|}
 \right\rceil.
\]
If $N_1=0$, the right-hand side is $1$. In this case the same conclusion
follows from the $\lambda$-choosable graph $K_q$, for which
$\delta_q(K_q)=0$.

We now assume that every part of the partition is equal to $k$. Let $L$ be an
arbitrary $\equalparts{k}{q}$-assignment of $T_{q+1}(N_2)$, with color pools
$X_1,\ldots,X_q$. For every $i\in[q]$, independently assign each color in
$X_i$ label $0$ with probability $\rho_q$ and label $i$ with probability
$1-\rho_q$. If $v\in V_0$, the probability that $L(v)$ contains no color
labeled $0$ is $(1-\rho_q)^{qk}=\rho_q^k$.
If $v\in V_i$ for some $i\in[q]$, only the $k$ colors of $L_i(v)$  
may receive label $i$, and the probability that none of them does is
$\rho_q^k$.
The expected number of vertices having no color labeled with the index of
their part is therefore $(q+1)N_2\rho_q^k<1$.
Some labeling again leaves no such vertex. By the preceding observation,
we obtain a proper $L$-coloring, so $T_{q+1}(N_2)$ is
$\equalparts{k}{q}$-choosable.

Finally, take $N_2=\left\lceil \rho_q^{-k}/(q+1)\right\rceil-1$
whenever this integer is positive. Then $(q+1)N_2\rho_q^k<1$, and therefore
\[
 f(\equalparts{k}{q})\geq N_2+1
 =\left\lceil\frac{\rho_q^{-k}}{q+1}\right\rceil.
\]
If $N_2=0$, the right-hand side is again $1$, and the conclusion follows from
$K_q$.

\section{Proof of Theorem~\ref{thm:existence}}
\label{sec:surrounded}

Let $\lambda=\{k_1,k_2,\ldots,k_q\}$ be a fixed partition. Let $G$ be a
graph, and let $d=\delta_q(G)$.
Let $X_1,\ldots,X_q$ be pairwise disjoint color pools with $|X_i| = k_i^2$ for every $i \in [q]$.

The proof uses a deferred-exposure form of Alon's random-list
argument~\cite{Alon2000}. The ordinary list-coloring argument works as
follows. All lists use a fixed color palette and are chosen independently at
random at the beginning, but are revealed only when needed. The goal is to
find a list assignment $L$ for which $G$ has no $L$-coloring. It is enough
to show that this occurs with positive probability. We first reveal
the lists on a sparse set $B$. Many vertices outside $B$ are then
\emph{good}, each of which has the property that for every $L$-coloring of
$G[B]$, the colors used by its $B$-neighbors meet every half-set of the
palette. These colors therefore contain more than half of the palette. When
the previously unexposed random list at a good vertex is finally revealed,
it has a substantial chance of containing only colors already used by its
$B$-neighbors. We complete the argument with a union bound over the
$L$-colorings of $G[B]$.

For a general partition $\lambda=\{k_1,\ldots,k_q\}$, the color palette is
divided into $q$ pools. To play the same role as a good vertex in the
ordinary list-coloring argument, a vertex must satisfy the half-set property
in every pool for every $L$-coloring of $G[B]$. It is difficult to establish
all these properties by revealing $B$ in one step. We therefore divide $B$
into layers and reveal them in stages. By revealing the layers in this order,
we establish the required half-set properties recursively. Before describing the random experiment that chooses the lists and assigns
vertices to the layers, we introduce the notation and parameters used in the
construction.

For $i \in [q]$ and $\varnothing \ne I \subseteq [q]$, let
\[
 \alpha_i =
 \frac{\binom{\lfloor |X_i|/2\rfloor + 1}{k_i}}
 {\binom{|X_i|}{k_i}}=\frac{\binom{\lfloor k_i^2/2\rfloor + 1}{k_i}}
 {\binom{k_i^2}{k_i}},
 \quad
 \rho_I =
 \prod_{i \in I}
 \frac{\binom{\lceil |X_i|/2\rceil}{k_i}}
 {\binom{|X_i|}{k_i}}=\prod_{i \in I}
 \frac{\binom{\lceil k_i^2 /2\rceil}{k_i}}
 {\binom{k_i^2}{k_i}}.
\]
Let $c_{[q]}=1$. For $\varnothing\ne I\subsetneq[q]$, define $c_I$ and
$\hat{c}_I$ recursively by
\begin{equation}\label{eq:c-recursion}
 \hat{c}_{I} = \sum_{J \supsetneq I}c_J,
 \quad
 c_I = \frac{12\hat{c}_{I}\ln |\lambda|}
 {\rho_I\prod_{j \notin I}\alpha_j}.
\end{equation}
Finally, let
$\widetilde{c} = \sum_{\varnothing \ne J \subseteq [q]}c_J$.

With these parameters fixed, assume for the moment that
$\widetilde{c}/\sqrt{d} \leq 1/8$.
The random experiment consists of the following two independent families of
choices.
\begin{enumerate}
\item For every vertex $v$ and every $i \in [q]$, choose $L_i(v)$ uniformly
from $\binom{X_i}{k_i}$, independently over all pairs $(v,i)$, and define
$L(v)=\bigcup_{i=1}^q L_i(v)$.
\item Independently of all list components, assign each vertex $v$ a label
$I\subseteq[q]$, independently over the vertices. Let $B_I$ be the set of
vertices with label $I$. For every nonempty $I\subseteq[q]$, let
$\Prob(v\in B_I)=c_I/\sqrt d$, and let
$\Prob(v\in B_\varnothing)=1-\widetilde c/\sqrt d$.
\end{enumerate}
For every $I \subseteq [q]$, define
\[
 \hat{B}_{I} = \bigcup_{J \supsetneq I}B_J.
\]
Thus $\hat{B}_{[q]}=\varnothing$, while $\hat{B}_{\varnothing}$ is the union
of all nonempty layers. Let $B=\hat{B}_{\varnothing}$, as in Alon's
argument~\cite{Alon2000}. Since the sets $B_I$ form a partition of $V(G)$,
we have $B_\varnothing=V(G)\setminus B$.

\subsection{Blocking events and surrounded vertices}\label{subsec:blocking-events}

A \emph{state} is a triple $(S,I,\mathbf{P}_I)$ satisfying
$S \subseteq V(G)$,
$I \subseteq [q]$, $|S| = |I|$, and
$\mathbf{P}_I = (P_i)_{i \in I}$, where
$P_i\in\binom{X_i}{\lceil |X_i|/2\rceil}$ for every $i\in I$.
The empty state has $S = I = \varnothing$ and the unique empty tuple
$\mathbf{P}_\varnothing$.
For $I \subseteq [q]$, a proper $L$-coloring $\phi$ of
$G[\hat{B}_{I} \cap N(S)]$, a vertex $u \in N(S)$, and $j \notin I$, define
\[
 \Use_j^\phi(u, S, I) =
 \phi\bigl(\hat{B}_{I} \cap N(S) \cap N(u)\bigr) \cap X_j.
\]
We say that $\phi$ \emph{avoids} $\mathbf{P}_I$ if it uses no color in
$\bigcup_{i \in I}P_i$.

We first define the obstruction needed at a fixed state.

\begin{definition}\label{def:blocking-events}
For a state $(S,I,\mathbf{P}_I)$ with $I \ne \varnothing$, let
$\mathcal{E}(S,I,\mathbf{P}_I)$ be the event that, for every proper
$L$-coloring $\phi$ of
$G[\hat{B}_{I} \cap N(S)]$ that avoids $\mathbf{P}_I$, there is a vertex
$u \in B_I \cap N(S)$ satisfying
\begin{enumerate}[label=(\alph*),ref=(\alph*)]
\item\label{item:event-prescribed}
$L_i(u) \subseteq P_i$ for every $i \in I$,
\item\label{item:event-blocking}
$L_j(u) \subseteq \Use_j^\phi(u, S, I)$ for every $j \notin I$.
\end{enumerate}
We call the state $(S,I,\mathbf{P}_I)$ \emph{blocking} if
$\mathcal{E}(S,I,\mathbf{P}_I)$ occurs.
\end{definition}

When $I = [q]$, Definition~\ref{def:blocking-events} has the following
particularly simple form. Since $\hat{B}_{[q]} = \varnothing$, the graph
$G[\hat{B}_{[q]} \cap N(S)]$ has the unique empty coloring $\phi$, which
avoids $\mathbf{P}_{[q]}$. Moreover, there is no $j \notin [q]$, so
Condition~\ref{item:event-blocking} is vacuous. Hence
$\mathcal{E}(S,[q],\mathbf{P}_{[q]})$ is precisely the event that there is a
vertex $w \in B_{[q]} \cap N(S)$ such that
$L_i(w) \subseteq P_i$ for every $i \in [q]$.

The role of the blocking event is to prevent a proper $L$-coloring $\phi$
of $G[\hat{B}_I\cap N(S)]$ that avoids $\mathbf{P}_I$ from being extended
to the corresponding vertex $u$ while still avoiding $\mathbf{P}_I$.
Condition~\ref{item:event-blocking} prevents $u$ from using any list
component $L_j(u)$ with $j\notin I$, since every color in $L_j(u)$ is
already used by a neighbor of $u$ under $\phi$. Thus every proper extension
of $\phi$ to $u$ must use a color from $L_i(u)$ for some $i\in I$. By
Condition~\ref{item:event-prescribed}, $L_i(u)\subseteq P_i$. The extension
therefore uses a color in $\bigcup_{i\in I}P_i$ and no longer avoids
$\mathbf{P}_I$.
 
To find the vertex $u$ required in
Definition~\ref{def:blocking-events}, we need the following definition,
where a member of
$\binom{X_i}{\lceil |X_i|/2\rceil}$ is called a \emph{half-set} of $X_i$
for every $i\in[q]$.

\begin{definition}\label{def:surrounded}
Let $(S,I,\mathbf{P}_I)$ be a state with $I \subsetneq [q]$. A vertex
$u \in N(S) \setminus \hat{B}_{I}$ is \emph{surrounded at}
$(S,I,\mathbf{P}_I)$ if, for every $j \notin I$ and every half-set $P_j$ of
$X_j$, the blocking event
$\mathcal{E}(S \cup \{u\},J,\mathbf{P}_J)$ occurs, where
$J = I \cup \{j\}$ and $\mathbf{P}_J$ is the tuple extending $\mathbf{P}_I$ with
$j$-coordinate $P_j$.
\end{definition}

The role of surrounded vertices is captured by the following lemma. In every
remaining color pool, the neighbors of a surrounded vertex use more than half
of the colors.

\begin{lemma}\label{lem:recursive-hit}
Suppose that $u$ is surrounded at
$(S,I,\mathbf{P}_I)$, where $I$ is allowed to be empty, and let $\phi$ be a
proper $L$-coloring of $G[\hat{B}_{I} \cap N(S)]$ that avoids
$\mathbf{P}_I$.
For every $j \notin I$, the set $\Use_j^\phi(u, S, I)$ meets every
half-set of $X_j$. Consequently,
\[
 |\Use_j^\phi(u, S, I)| \geq \lfloor k_j^2/2\rfloor + 1.
\]
\end{lemma}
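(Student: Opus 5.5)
The proof is by contradiction. Fix $j\notin I$ and suppose some half-set $P_j$ of $X_j$ is disjoint from $\Use_j^\phi(u,S,I)$. Put $J=I\cup\{j\}$ and let $\mathbf{P}_J$ extend $\mathbf{P}_I$ by $P_j$. Since $u$ is surrounded at $(S,I,\mathbf{P}_I)$, the event $\mathcal{E}(S\cup\{u\},J,\mathbf{P}_J)$ occurs. The plan is to feed a restriction of $\phi$ into this event and show that the vertex it returns cannot be coloured by $\phi$.

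\emph{Step 1: build the restricted colouring.} Every $K\supsetneq J$ also satisfies $K\supsetneq I$, so $\hat{B}_J\subseteq\hat{B}_I$. Also $N(S\cup\{u\})=N(S)\cap N(u)\subseteq N(S)$. Hence $\hat{B}_J\cap N(S\cup\{u\})$ lies in the domain of $\phi$. Let $\psi$ be the restriction of $\phi$ to this set; it is a proper $L$-colouring of $G[\hat{B}_J\cap N(S\cup\{u\})]$.

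\emph{Step 2: check that $\psi$ avoids $\mathbf{P}_J$.} It avoids $\mathbf{P}_I$ because $\phi$ does. Its colours lying in $X_j$ are colours of vertices in $\hat{B}_I\cap N(S)\cap N(u)$, so they belong to $\Use_j^\phi(u,S,I)$, which is disjoint from $P_j$. Colours outside $X_j$ cannot lie in $P_j\subseteq X_j$. So the blocking event applies to $\psi$. It yields a vertex $w\in B_J\cap N(S)\cap N(u)$ with $L_i(w)\subseteq P_i$ for all $i\in J$, and $L_l(w)\subseteq\Use_l^\psi(w,S\cup\{u\},J)$ for all $l\notin J$.

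\emph{Step 3: derive the contradiction.} Since $B_J\subseteq\hat{B}_I$, the vertex $w$ lies in $\hat{B}_I\cap N(S)$, so $\phi(w)$ is defined. Let $\phi(w)\in L_l(w)$, where $X_l$ is the pool containing $\phi(w)$. There are three cases.
\begin{enumerate}
\item If $l\notin J$, then $\phi(w)=\phi(x)$ for some neighbour $x$ of $w$ in $\hat{B}_J\cap N(S\cup\{u\})$. This set lies in the domain of $\phi$, so $\phi$ is not proper, a contradiction.
\item If $l\in I$, then $\phi(w)\in P_l$, contradicting that $\phi$ avoids $\mathbf{P}_I$.
\item If $l=j$, then $\phi(w)\in P_j$. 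But $w\in\hat{B}_I\cap N(S)\cap N(u)$, so $\phi(w)\in\Use_j^\phi(u,S,I)$, which is disjoint from $P_j$, a contradiction.
\end{enumerate}
Hence $\Use_j^\phi(u,S,I)$ meets every half-set of $X_j$.

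\emph{Step 4: the size bound.} Suppose $|\Use_j^\phi(u,S,I)|\le\lfloor k_j^2/2\rfloor$. Then its complement in $X_j$ has at least $k_j^2-\lfloor k_j^2/2\rfloor=\lceil k_j^2/2\rceil$ elements. So the complement contains a half-set of $X_j$ missed by $\Use_j^\phi(u,S,I)$, contradicting Step 3. Therefore $|\Use_j^\phi(u,S,I)|\ge\lfloor k_j^2/2\rfloor+1$.

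The main point needing care is the case $l=j$ in Step 3. It relies on $w$ being simultaneously a neighbour of $u$ and inside the domain of $\phi$, which is exactly the containment $B_J\subseteq\hat{B}_I$ from Step 1.
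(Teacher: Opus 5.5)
Your proof is correct and follows the paper's argument essentially step for step. You use the same contradiction hypothesis on a half-set $P_j$ missed by $\Use_j^\phi(u,S,I)$, feed the same restriction of $\phi$ to $\hat{B}_J\cap N(S\cup\{u\})$ into $\mathcal{E}(S\cup\{u\},J,\mathbf{P}_J)$, and make the same three-way case split on the pool containing $\phi(w)$, followed by the same complement count in $X_j$.
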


\begin{proof}
Fix $j \notin I$ and suppose that some half-set $P_j$ of $X_j$ is disjoint
from $\Use_j^\phi(u, S, I)$. Let $J = I \cup \{j\}$, and let $\mathbf{P}_J$
extend $\mathbf{P}_I$ by the $j$-coordinate $P_j$.
Since $u$ is surrounded at $(S,I,\mathbf{P}_I)$, the blocking event
$\mathcal{E}(S \cup \{u\},J,\mathbf{P}_J)$ occurs.

Let $\phi'$ be the restriction
of $\phi$ to $\hat{B}_{J} \cap N(S \cup \{u\})$. Since $J \supsetneq I$, we have
$\hat{B}_{J} \cap N(S \cup \{u\})
 \subseteq \hat{B}_{I} \cap N(S) \cap N(u)$. Thus $\phi'$ is a proper
$L$-coloring of $G[\hat{B}_{J} \cap N(S \cup \{u\})]$ and avoids
$\mathbf{P}_I$. We claim that $\phi'$ also uses no color in $P_j$. If
$\phi'(v) \in P_j$ for some
$v \in \hat{B}_{J} \cap N(S \cup \{u\})$, then
$v \in \hat{B}_{I} \cap N(S) \cap N(u)$ and $\phi'(v) = \phi(v)$. Since
$P_j \subseteq X_j$, by the definition of $\Use_j^\phi(u, S, I)$, we have
$\phi'(v) \in \Use_j^\phi(u, S, I)$. Hence
$\phi'(v) \in P_j \cap \Use_j^\phi(u, S, I)$, contrary to the choice of $P_j$.
Therefore, $\phi'$ avoids $\mathbf{P}_J$.

Since the blocking event $\mathcal{E}(S \cup \{u\},J,\mathbf{P}_J)$ occurs and
$\phi'$ avoids $\mathbf{P}_J$, there is a vertex
$z \in B_J \cap N(S \cup \{u\})$ by
Definition~\ref{def:blocking-events}. In particular,
$z \in \hat{B}_{I} \cap N(S) \cap N(u)$, so $\phi(z)$ is defined. Choose
$r \in [q]$ such that $\phi(z) \in L_r(z)$. If $r \in I$, then by
Definition~\ref{def:blocking-events}\ref{item:event-prescribed}, we have
$\phi(z) \in P_r$, contrary to the fact that $\phi$ avoids $\mathbf{P}_I$.
If $r = j$, then by
Definition~\ref{def:blocking-events}\ref{item:event-prescribed} and the definition of
$\Use_j^\phi(u, S, I)$, we have
$\phi(z) \in P_j \cap \Use_j^\phi(u, S, I)$, contrary to the choice of $P_j$.
Finally, if $r \notin J$, then by
Definition~\ref{def:blocking-events}\ref{item:event-blocking}, we have
$\phi(z) \in \Use_r^{\phi'}(z, S \cup \{u\}, J)$. Hence there is a vertex
$v \in \hat{B}_{J} \cap N(S \cup \{u\}) \cap N(z)$ such that
$\phi(v) = \phi'(v) = \phi(z)$, contradicting the fact that $\phi$ is a proper coloring on $G[\hat{B}_{I} \cap N(S)]$.

Thus $\Use_j^\phi(u, S, I)$ meets every
half-set of $X_j$. Its complement has size at most
$\lceil k_j^2/2\rceil - 1$, and hence
$|\Use_j^\phi(u, S, I)| \geq \lfloor k_j^2/2\rfloor + 1$.
\end{proof}

\subsection{The local blocking lemma}\label{subsec:local-blocking}

The blocking events were defined in the previous subsection. We now prove that,
for every state $(S,I,\mathbf{P}_I)$ with $I \ne \varnothing$, the probability
that the blocking event $\mathcal{E}(S,I,\mathbf{P}_I)$ does not occur is at most
$\varepsilon_I(d)$, where the error functions $\varepsilon_I(d)$ are defined
recursively as follows. First, let
\[
 \varepsilon_{[q]}(d) = \exp(-\rho_{[q]}\sqrt{d}).
\]
For $\varnothing \ne I \subsetneq [q]$, once $\varepsilon_J(d)$ has been
defined for every $J \supsetneq I$, let
\begin{align*}
 \varepsilon_I(d)
=  \exp\left(-\frac{\hat{c}_{I}\sqrt{d}}{3}\right)
 +4\sum_{j \notin I}
 \binom{k_j^2}{\lceil k_j^2/2\rceil}
 \varepsilon_{I \cup \{j\}}(d) 
 +\exp\left(-\frac{c_I\rho_I\sqrt{d}}{16}\right)
 +\exp\left(-\hat{c}_{I}\sqrt{d}\ln |\lambda|\right).
\end{align*}
For every nonempty $I \subseteq [q]$, by induction on $q - |I|$, we have
$\varepsilon_I(d) \to 0$ as $d \to \infty$.

Before stating and proving the lemma, we introduce the following standard form of the Chernoff bound.

\begin{lemma}\label{lem:chernoff}
	Let $X \sim \Bin(n,p)$ and $\mu=np$. Then 
	\begin{enumerate}[label=(\arabic*),ref=(\arabic*)]
		\item\label{item:chernoff-lower}
		$\Prob(X < \mu/2) \leq e^{-\mu/8}$.
		\item\label{item:chernoff-upper}
		$\Prob(X > 2\mu) \leq e^{-\mu/3}$.
	\end{enumerate}
\end{lemma}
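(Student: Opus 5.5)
This is the standard multiplicative Chernoff bound, and I would prove it by the exponential moment method. Write $X=\sum_{t=1}^n Y_t$ with independent Bernoulli$(p)$ indicators $Y_t$. For any real $s$, independence and $1+x\le e^x$ give
\[
\Ex[e^{sX}]=\bigl(1+p(e^s-1)\bigr)^n\le \exp\bigl(\mu(e^s-1)\bigr).
\]
Markov's inequality applied to $e^{sX}$ (with $s>0$ for the upper tail and $s<0$ for the lower tail) then yields, for $\delta>0$,
\[
\Prob(X\ge(1+\delta)\mu)\le\Bigl(\frac{e^{\delta}}{(1+\delta)^{1+\delta}}\Bigr)^{\mu},
\qquad
\Prob(X\le(1-\delta)\mu)\le\Bigl(\frac{e^{-\delta}}{(1-\delta)^{1-\delta}}\Bigr)^{\mu}.
\]
These follow by choosing the optimal $s=\ln(1+\delta)$, respectively $s=\ln(1-\delta)$.

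For part \ref{item:chernoff-upper}, I take $\delta=1$. Since $\{X>2\mu\}\subseteq\{X\ge 2\mu\}$, the probability is at most $(e/4)^{\mu}=e^{-(2\ln 2-1)\mu}$. The exponent satisfies $2\ln2-1\approx0.386>1/3$, which gives $e^{-\mu/3}$.

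For part \ref{item:chernoff-lower}, I take $\delta=1/2$. Since $\{X<\mu/2\}\subseteq\{X\le\mu/2\}$, the probability is at most $\bigl(e^{-1/2}(1/2)^{-1/2}\bigr)^{\mu}=(2/e)^{\mu/2}=e^{-\frac{1-\ln 2}{2}\mu}$. The exponent satisfies $\frac{1-\ln2}{2}\approx0.153>1/8$, which gives $e^{-\mu/8}$.

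Alternatively, one can simply cite the textbook forms $\Prob(X\le(1-\delta)\mu)\le e^{-\delta^2\mu/2}$ and $\Prob(X\ge(1+\delta)\mu)\le e^{-\delta^2\mu/(2+\delta)}$, which give exactly $e^{-\mu/8}$ and $e^{-\mu/3}$ at $\delta=1/2$ and $\delta=1$ respectively.

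The degenerate cases $p=0$ (so $\mu=0$) and $p=1$ are trivial, since both events are then empty or the bounds exceed the trivial ones. No step is a real obstacle. The only care needed is checking the numerical constants $2\ln2-1\ge1/3$ and $(1-\ln2)/2\ge1/8$, and handling strict versus non-strict inequalities, which only helps.
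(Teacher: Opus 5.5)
Your proof is correct: the exponential-moment bounds at $\delta=1$ and $\delta=1/2$ give exponents $2\ln 2-1>1/3$ and $(1-\ln 2)/2>1/8$, and passing from the strict to the non-strict inequalities only enlarges the events being bounded. The paper gives no proof and simply quotes the lemma as a standard form of the Chernoff bound, so your derivation, or your alternative of citing the textbook forms $e^{-\delta^2\mu/2}$ and $e^{-\delta^2\mu/(2+\delta)}$, is the standard argument it relies on.
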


The desired probability bound is as follows.

\begin{lemma}
\label{lem:local-blocking}
Suppose that $d = \delta_q(G)$ satisfies $\widetilde{c}/\sqrt{d} \leq 1/8$.
For every state $(S,I,\mathbf{P}_I)$ with $\varnothing \ne I \subseteq [q]$, the probability
that the blocking event $\mathcal{E}(S,I,\mathbf{P}_I)$ does not occur is at most
$\varepsilon_I(d)$.
\end{lemma}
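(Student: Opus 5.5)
We will argue by induction on $q-|I|$. The error function $\varepsilon_I(d)$ has four summands, and each one corresponds to a failure event below.

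\textbf{Base case $I=[q]$.} Here $\mathcal{E}(S,[q],\mathbf{P}_{[q]})$ fails exactly when no $w\in B_{[q]}\cap N(S)$ has $L_i(w)\subseteq P_i$ for all $i$. For each $w\in N(S)$, the events $\{w\in B_{[q]}\}$ and $\{L_i(w)\subseteq P_i\ \forall i\}$ are independent of each other and independent across $w$. Their joint probability is $(c_{[q]}/\sqrt d)\rho_{[q]}=\rho_{[q]}/\sqrt d$. Since $|N(S)|\ge d$, the failure probability is at most
\[
(1-\rho_{[q]}/\sqrt d)^{d}\le \exp(-\rho_{[q]}\sqrt d).
\]

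\textbf{Inductive step, $\varnothing\ne I\subsetneq[q]$.} We expose the randomness in two rounds.

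\emph{Round 1.} We reveal the labels of all vertices of $N(S)$, and the lists of the vertices in $\hat B_I\cap N(S)$. The lists of vertices in $B_I\cap N(S)$ stay hidden. These hidden lists are independent of everything revealed, because every event $\mathcal{E}(S\cup\{u\},J,\cdot)$ with $J\supsetneq I$ depends only on the labels and on the lists inside $\hat B_I$.

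The following conditions should hold with high probability.
\begin{enumerate}[label=(\roman*)]
\item $|\hat B_I\cap N(S)|\le 2\hat c_I\sqrt d$. Indeed, $|\hat B_I\cap N(S)|$ is binomial with mean at least $\hat c_I\sqrt d$; we dominate it by a binomial with mean exactly $\hat c_I|N(S)|/\sqrt d$ and apply Lemma~\ref{lem:chernoff}\ref{item:chernoff-upper}. This produces the term $\exp(-\hat c_I\sqrt d/3)$. We may have to localize, either by restricting to a $d$-subset of $N(S)$ or by counting colorings, to obtain the bound $2\hat c_I\sqrt d$ rather than a bound proportional to $|N(S)|/\sqrt d$. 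This is a technical point to arrange.
\item At least $c_I\sqrt d/4$ vertices $u\in N(S)\setminus\hat B_I$ have label $I$ and are surrounded at $(S,I,\mathbf{P}_I)$.
\end{enumerate}

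To establish (ii), fix $u\in N(S)$. For each $j\notin I$ and each half-set $P_j$, the state $(S\cup\{u\},I\cup\{j\},\mathbf{P}_J)$ has $|S\cup\{u\}|\le q$, so by induction (and codegree at least $d$) it is blocking except with probability $\varepsilon_{I\cup\{j\}}(d)$. A union bound shows that $u$ is not surrounded with probability at most
\[
\sum_{j}\binom{k_j^2}{\lceil k_j^2/2\rceil}\varepsilon_{I\cup\{j\}}(d)=:\eta.
\]
Surroundedness at $u$ does not depend on $u$'s own label or list, because $u\notin N(S\cup\{u\})$. Thus each $u$ has label $I$ with probability $c_I/\sqrt d$, independently of the surroundedness events. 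The count of good $u$ therefore has expectation at least $c_I\sqrt d(1-\eta)$. The surroundedness events are dependent across $u$, so instead of Chernoff we apply Markov's inequality to the number of non-surrounded vertices carrying label $I$. Combined with a Chernoff lower bound on $|B_I\cap N(S)|$, this gives failure probability at most $4\eta$ plus $\exp(-c_I\sqrt d/8)$. The latter is absorbed into the term $\exp(-c_I\rho_I\sqrt d/16)$, or handled directly.

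\emph{Round 2.} Condition on Round 1 and fix a proper coloring $\phi$ of $G[\hat B_I\cap N(S)]$ that avoids $\mathbf{P}_I$. The number of such colorings is at most $|\lambda|^{2\hat c_I\sqrt d}$. For each good $u$, Lemma~\ref{lem:recursive-hit} gives $|\Use_j^\phi(u,S,I)|\ge\lfloor k_j^2/2\rfloor+1$ for every $j\notin I$. The hidden list of $u$ then satisfies (a) and (b) with probability at least $\rho_I\prod_{j\notin I}\alpha_j$, independently over $u$.

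So $\phi$ fails to be blocked with probability at most
\[
\exp\Bigl(-\tfrac{c_I\sqrt d}{4}\,\rho_I\prod_{j\notin I}\alpha_j\Bigr)=\exp(-3\hat c_I\sqrt d\ln|\lambda|)
\]
by \eqref{eq:c-recursion}. A union bound over the colorings leaves $\exp(-\hat c_I\sqrt d\ln|\lambda|)$.

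\textbf{Main obstacle.} The hardest part is the bookkeeping. We must check that the surroundedness events and the required independence really hold, in particular that Round 2 lists are untouched by Round 1. We must also match the constants exactly to the four terms of $\varepsilon_I(d)$, including the Markov step that produces the factor $4$.
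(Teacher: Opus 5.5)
Your architecture is the paper's: induction on $q-|I|$, exposing $\hat B_I$ and its lists first, Markov over non-surrounded candidates using the inductive bounds, and a union bound over colorings of $G[\hat B_I\cap N(S)]$, with Lemma~\ref{lem:recursive-hit} giving the per-vertex probability. However, the point you defer as ``technical'' is a genuine gap as written.

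Bound (i), $|\hat B_I\cap N(S)|\le 2\hat c_I\sqrt d$, fails with high probability whenever $|N(S)|$ is a large multiple of $d$. The mean is $\hat c_I|N(S)|/\sqrt d$, and $|N(S)|$ can be of order $n$. Your proposed cure, restricting to a $d$-subset $N'\subseteq N(S)$, does not help. Condition (b) and Lemma~\ref{lem:recursive-hit} concern $\Use_j^\phi(u,S,I)=\phi(\hat B_I\cap N(S)\cap N(u))\cap X_j$, which depends on $\phi$ on all of $\hat B_I\cap N(S)$. So the union bound must still range over all proper colorings of $G[\hat B_I\cap N(S)]$, roughly $|\lambda|^{2\hat c_I|N(S)|/\sqrt d}$ of them. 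A $d$-subset supplies only about $c_I\sqrt d$ candidates, which cannot beat that count. Shrinking the $\Use$ sets to $N'$ would change the events themselves. With (i) and (ii) both stated as absolute multiples of $\sqrt d$, your Round~2 computation is valid only when $|N(S)|=O(d)$.

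The missing observation is that no localization is needed: state both bounds proportionally to $|N(S)|$. Chernoff gives $|\hat B_I\cap N(S)|\le 2\hat c_I|N(S)|/\sqrt d$ with failure probability at most $\exp(-\hat c_I|N(S)|/(3\sqrt d))\le\exp(-\hat c_I\sqrt d/3)$. Your own argument for (ii) already gives at least $\mu/4$ good vertices, where $\mu=c_I|N(S)|/\sqrt d$ is the mean of $|B_I\cap N(S)|$, outside an event of probability at most $4\eta+\exp(-\mu/8)$. By \eqref{eq:c-recursion}, the union bound then gives
\[
 \exp\Bigl(\frac{2\hat c_I|N(S)|\ln|\lambda|}{\sqrt d}-\frac{3\hat c_I|N(S)|\ln|\lambda|}{\sqrt d}\Bigr)
 =\exp\Bigl(-\frac{\hat c_I|N(S)|\ln|\lambda|}{\sqrt d}\Bigr)
 \le\exp(-\hat c_I\sqrt d\ln|\lambda|),
\]
using $|N(S)|\ge d$. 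This is exactly how the paper proceeds.

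With that repair your proof is correct and differs from the paper only in bookkeeping. The paper applies Markov to all non-surrounded vertices of $N(S)\setminus\hat B_I$. It then uses $\widetilde c/\sqrt d\le 1/8$ to get $|W|\ge|N(S)|/2$, and applies Chernoff to the set $R$ of vertices of $W$ in $B_I$ satisfying (a). You instead apply Markov only to label-$I$ vertices and fold (a) into the per-coloring probability $\rho_I\prod_{j\notin I}\alpha_j$. As a result, your version does not need the hypothesis $\widetilde c/\sqrt d\le 1/8$ beyond making the random experiment well defined.
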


\begin{proof}
We use induction on $q - |I|$. By the definition of $d = \delta_q(G)$, we have
$|N(S)| \geq d$.

If $I = [q]$, then a fixed vertex $w \in N(S)$ belongs to $B_{[q]}$ and
satisfies $L_i(w) \subseteq P_i$ for every $i \in [q]$ with probability
$\rho_{[q]}/\sqrt d$. These events are independent for distinct vertices.
Hence the probability that the blocking event
$\mathcal{E}(S,[q],\mathbf{P}_{[q]})$ does not occur is at most
\[
 \left(1 - \frac{\rho_{[q]}}{\sqrt d}\right)^{|N(S)|}
 \leq \exp\left(-\frac{\rho_{[q]}|N(S)|}{\sqrt d}\right)
 \leq \exp(-\rho_{[q]}\sqrt{d})
 = \varepsilon_{[q]}(d).
\]

Now let $\varnothing \ne I \subsetneq [q]$ and suppose that the lemma holds
for every $J \supsetneq I$.
We divide the induction step into three steps. In Step~1, we reveal the sets
$B_J$, for $J \supsetneq I$, and all list components of the vertices in
$\hat{B}_{I}$, and show that many vertices
$u \in N(S) \setminus \hat{B}_{I}$ are surrounded at $(S,I,\mathbf{P}_I)$. In
Step~2, we reveal $B_I$ and the components $L_i(u)$ for $u\in B_I$ and
$i\in I$. We show that many of the surrounded vertices have
$L_i(u) \subseteq P_i$ for every $i \in I$. These are the candidate blocking vertices
for the event
$\mathcal{E}(S,I,\mathbf{P}_I)$. In Step~3, we reveal their remaining list
components. For every proper $L$-coloring $\phi$ of
$G[\hat{B}_{I} \cap N(S)]$ that avoids $\mathbf{P}_I$, we show that some
candidate vertex satisfies Definition~\ref{def:blocking-events}%
\ref{item:event-blocking}. Hence the blocking event
$\mathcal{E}(S,I,\mathbf{P}_I)$ occurs.

\medskip
\noindent
\textbf{Step 1.} Reveal the sets $B_J$, $J \supsetneq I$, and the lists on
$\hat{B}_{I}$.\par\nobreak
\medskip

In this step, we reveal the sets $B_J$ for $J \supsetneq I$,
together with all list components of the vertices in their union
$\hat{B}_{I}$. After these choices are revealed, we know whether each blocking
event whose second coordinate strictly contains $I$ occurs. We therefore also
know whether each $u \in N(S) \setminus \hat{B}_{I}$ is surrounded at
$(S,I,\mathbf{P}_I)$, and this decision does not use whether $u \in B_I$ or any
list component of $u$.

Since the sets $B_J$ are pairwise disjoint, each vertex belongs to
$\hat{B}_{I}$ independently with probability $\hat{c}_{I}/\sqrt{d}$.
Hence $|\hat{B}_{I} \cap N(S)|$ has binomial distribution with mean
$\hat{c}_{I}|N(S)|/\sqrt{d}$.
By Lemma~\ref{lem:chernoff}\ref{item:chernoff-upper}, we have
\begin{equation}\label{eq:upper-tail}
 \Prob\left(
 |\hat{B}_{I} \cap N(S)| > \frac{2\hat{c}_{I}|N(S)|}{\sqrt{d}}
 \right)
 \leq \exp\left(-\frac{\hat{c}_{I}|N(S)|}{3\sqrt{d}}\right)
 \leq \exp\left(-\frac{\hat{c}_{I}\sqrt{d}}{3}\right).
\end{equation}

For each $j \notin I$ and each half-set $P_j$ of $X_j$, let
$J = I \cup \{j\}$, and let $\mathbf{P}_J$ be the tuple extending $\mathbf{P}_I$
with $j$-coordinate $P_j$. Let $F$ be the set of vertices
$u \in N(S) \setminus \hat{B}_{I}$ for which the blocking event
$\mathcal{E}(S \cup \{u\},J,\mathbf{P}_J)$ fails for at least one such pair
$(j,P_j)$. Thus $F$ is determined by the choices revealed in Step~1.
For a fixed $u \in N(S)$, by the induction hypothesis and the union bound over
all such pairs $(j, P_j)$, we have
\[
 \Prob(u \in F) \leq
 \sum_{j \notin I}
 \binom{k_j^2}{\lceil k_j^2/2\rceil}
 \varepsilon_{I \cup \{j\}}(d).
\]
By linearity of expectation and Markov's inequality, we have
\begin{equation}\label{eq:failed-vertex-tail}
 \Prob\left(|F| > \frac{|N(S)|}{4}\right)
 \leq 4\sum_{j \notin I}
 \binom{k_j^2}{\lceil k_j^2/2\rceil}
 \varepsilon_{I \cup \{j\}}(d).
\end{equation}
The events that different vertices belong to $F$ need not be independent,
since only linearity of expectation and Markov's inequality are used.

\medskip
\noindent
\textbf{Step 2.} Reveal $B_I$, together with $L_i(u)$ for every $u\in B_I$
and $i\in I$.
\par\nobreak
\medskip

Condition on any possible result of Step~1 for which
\begin{equation}\label{eq:first-step-bounds}
 |\hat{B}_{I} \cap N(S)|
 \leq \frac{2\hat{c}_{I}|N(S)|}{\sqrt{d}},
 \quad
 |F| \leq \frac{|N(S)|}{4}.
\end{equation}
Let $W=N(S)\setminus(\hat{B}_I\cup F)$. By
Definition~\ref{def:surrounded}, this is precisely the set of vertices in
$N(S)\setminus\hat{B}_I$ that are surrounded at
$(S,I,\mathbf{P}_I)$. Since $\hat{c}_I\leq\widetilde{c}$ and
$\widetilde{c}/\sqrt{d}\leq 1/8$, we have
\[
 |W| \geq |N(S)| - \frac{2\hat{c}_{I}|N(S)|}{\sqrt{d}}
 - \frac{|N(S)|}{4} \geq \frac{|N(S)|}{2}.
\]
For $u \in W$, let $\mathcal{A}_u$ be the event that $u \in B_I$ and
$L_i(u) \subseteq P_i$ for every $i \in I$.
For every $u \in W$, the information revealed in Step~1 includes
$u \notin \hat{B}_{I}$. Moreover, since the sets $B_J$ are pairwise
disjoint, $\mathcal{A}_u$ implies $u \notin \hat{B}_{I}$. In the original
random experiment,
\[
 \Prob(\mathcal{A}_u) = \frac{c_I\rho_I}{\sqrt{d}},
 \quad
 \Prob(u \notin \hat{B}_{I})
 = 1 - \frac{\hat{c}_{I}}{\sqrt{d}}.
\]
It follows that
\[
\begin{aligned}
 \Prob\bigl(\mathcal{A}_u \mid u \notin \hat{B}_{I}\bigr) =
 \frac{
 \Prob\bigl(\mathcal{A}_u
 \cap \{u \notin \hat{B}_{I}\}\bigr)}
 {\Prob(u \notin \hat{B}_{I})} =
 \frac{\Prob(\mathcal{A}_u)}
 {\Prob(u \notin \hat{B}_{I})} =
 \frac{c_I\rho_I}{\sqrt{d} - \hat{c}_{I}}
 \geq \frac{c_I\rho_I}{\sqrt{d}}.
\end{aligned}
\]
For $u\in W$, Step~1 reveals that $u\notin\hat{B}_I$, but it does not
reveal whether $u\in B_I$ or any list component of $u$. Conditional on this
result of Step~1, the events $\mathcal A_u$, $u\in W$, remain independent,
and by the preceding
calculation each occurs with probability
$c_I\rho_I/(\sqrt d-\hat c_I)$.

Let $R$ be the set of vertices $u\in W$ for which $\mathcal A_u$ occurs.
Then $|R|$ has a binomial distribution with parameters $|W|$ and
$c_I\rho_I/(\sqrt d-\hat c_I)$. Thus
\[
\Ex(|R|)
=\frac{c_I\rho_I|W|}{\sqrt d-\hat c_I}
\geq\frac{c_I\rho_I|N(S)|}{2\sqrt d}.
\]
By Lemma~\ref{lem:chernoff}\ref{item:chernoff-lower},
\begin{equation}\label{eq:candidate-tail}
 \Prob\left(
 |R| < \frac{c_I\rho_I|N(S)|}{4\sqrt{d}}
 \right)
 \leq \exp\left(-\frac{c_I\rho_I|N(S)|}{16\sqrt{d}}\right)
 \leq \exp\left(-\frac{c_I\rho_I\sqrt{d}}{16}\right).
\end{equation}

\medskip
\noindent
\textbf{Step 3.} Reveal the remaining list components of the vertices in $R$.
\par\nobreak
\medskip

Condition further on the event that
\begin{equation}\label{eq:candidate-count}
 |R| \geq \frac{c_I\rho_I|N(S)|}{4\sqrt{d}}.
\end{equation}
At this point, the set $B_I$, the components $L_i(u)$ for $u\in B_I$ and
$i\in I$, and the set $R$ have been revealed.
Every vertex of $R$ lies in $B_I \cap N(S)$ and satisfies
Definition~\ref{def:blocking-events}\ref{item:event-prescribed}. It is also
surrounded at $(S,I,\mathbf{P}_I)$ by the definition of $W$. Reveal
$L_j(u)$ for every $u \in R$ and $j \notin I$.

After Step~1, the lists on $\hat{B}_{I} \cap N(S)$ are fixed. The number of
proper $L$-colorings of
$G[\hat{B}_{I} \cap N(S)]$ that avoid $\mathbf{P}_I$ is at most
\begin{equation}\label{eq:number-upper-colorings}
 |\lambda|^{|\hat{B}_{I} \cap N(S)|}
 \leq |\lambda|^{2\hat{c}_{I}|N(S)|/\sqrt{d}}
 = \exp\left(
 \frac{2\hat{c}_{I}|N(S)|\ln |\lambda|}{\sqrt{d}}
 \right).
\end{equation}
Fix such a coloring $\phi$. Lemma~\ref{lem:recursive-hit} shows that
$|\Use_j^\phi(u,S,I)|\geq\lfloor k_j^2/2\rfloor+1$ for every $u\in R$ and
$j\notin I$. By
the definition of $\alpha_j$, a uniformly random member of
$\binom{X_j}{k_j}$ is contained in $\Use_j^\phi(u, S, I)$ with probability
at least $\alpha_j$. For the list components revealed in Step~3, a fixed
vertex $u \in R$ therefore satisfies Condition~\ref{item:event-blocking} in
Definition~\ref{def:blocking-events} with probability at least
$\prod_{j \notin I}\alpha_j$. These events are independent over the vertices
of $R$. By Inequality~\eqref{eq:candidate-count}, the probability that no vertex of $R$
satisfies Condition~\ref{item:event-blocking} in
Definition~\ref{def:blocking-events} is at most
\[
 \left(1 - \prod_{j \notin I}\alpha_j\right)^{|R|}
 \leq \exp\left(-|R|\prod_{j \notin I}\alpha_j\right)
 \leq \exp\left(
 -\frac{c_I\rho_I|N(S)|}{4\sqrt{d}}
 \prod_{j \notin I}\alpha_j
 \right).
\]

Under the conditioning in \eqref{eq:first-step-bounds}
and~\eqref{eq:candidate-count}, we obtain from
Equality~\eqref{eq:c-recursion}, Inequality~\eqref{eq:number-upper-colorings},
and the union bound that the probability that some such coloring has no
vertex in $R$ satisfying
Definition~\ref{def:blocking-events}\ref{item:event-blocking} is at most
\begin{equation}\label{eq:third-step-failure}
\begin{aligned}
|\lambda|^{|\hat{B}_{I} \cap N(S)|}
\left(1 - \prod_{j \notin I}\alpha_j\right)^{|R|}
 & \leq \exp\left(
 \frac{2\hat{c}_{I}|N(S)|\ln |\lambda|}{\sqrt{d}}
 -\frac{c_I\rho_I|N(S)|}{4\sqrt{d}}
 \prod_{j \notin I}\alpha_j
 \right) \\
 & = \exp\left(
 -\frac{\hat{c}_{I}|N(S)|\ln |\lambda|}{\sqrt{d}}
 \right)\\
 & \leq
 \exp\left(-\hat{c}_{I}\sqrt{d}\ln |\lambda|\right).
\end{aligned}
\end{equation}

By construction, $R \subseteq B_I \cap N(S)$, and every vertex of $R$
satisfies Definition~\ref{def:blocking-events}\ref{item:event-prescribed}.
If Inequalities~\eqref{eq:first-step-bounds} and~\eqref{eq:candidate-count}
hold and the event estimated in Inequality~\eqref{eq:third-step-failure} does
not occur, then every coloring in Definition~\ref{def:blocking-events} has a
vertex in $R$ satisfying
Definition~\ref{def:blocking-events}\ref{item:event-blocking}. Thus the
blocking event $\mathcal{E}(S,I,\mathbf{P}_I)$ occurs. Since the estimates in
Steps~2 and~3 apply after conditioning on any earlier information satisfying
the required bounds,
by Inequalities~\eqref{eq:upper-tail}, \eqref{eq:failed-vertex-tail},
\eqref{eq:candidate-tail}, and~\eqref{eq:third-step-failure}, the probability
that this blocking event does not occur is at most
\begin{align*}
 \exp\left(-\frac{\hat{c}_{I}\sqrt{d}}{3}\right)
 +4\sum_{j \notin I}
 \binom{k_j^2}{\lceil k_j^2/2\rceil}
 \varepsilon_{I \cup \{j\}}(d)
 +\exp\left(-\frac{c_I\rho_I\sqrt{d}}{16}\right)
 +\exp\left(-\hat{c}_{I}\sqrt{d}\ln |\lambda|\right)
 = \varepsilon_I(d).
\end{align*}
This completes the induction.
\end{proof}

\subsection{The final exposure}
\label{subsec:final-exposure}

We now use the local estimate to select suitable values for the information
exposed on the nonempty layers. Let $A$ be the set of vertices in
$V(G) \setminus B$ that are
surrounded at the empty state. Thus $a \in A$ precisely when the
blocking event $\mathcal{E}(\{a\},\{i\},\mathbf{P}_{\{i\}})$ occurs for every
$i \in [q]$ and every half-set $P_i$ of $X_i$, where
$\mathbf{P}_{\{i\}} = (P_i)$. By Lemma~\ref{lem:local-blocking} and the union
bound, we have
\begin{equation}\label{eq:bad-a}
 \Prob(a \notin A)
 \leq \frac{\widetilde{c}}{\sqrt{d}}
 +\sum_{i = 1}^q
 \binom{k_i^2}{\lceil k_i^2/2\rceil}
 \varepsilon_{\{i\}}(d).
\end{equation}

Choose $D_\lambda$ large enough so that for every $d \geq D_\lambda$,
\begin{enumerate}[label=(D\arabic*),ref=(D\arabic*),labelindent=2em,
	leftmargin=*, itemsep=0.5em,topsep=0.7em]
\item\label{cond:final-error} $\widetilde{c}/\sqrt{d}
+\sum_i\binom{k_i^2}{\lceil k_i^2/2\rceil}
\varepsilon_{\{i\}}(d) \leq 1/16$.
\item\label{cond:final-union} $16\widetilde{c}\ln |\lambda|/\sqrt{d}
 \leq \prod_{i = 1}^q \alpha_i$.
\end{enumerate}
By condition~\ref{cond:final-error}, we have
$\widetilde{c}/\sqrt{d} \leq 1/8$. Such a choice exists
because all constants depend only on $\lambda$ and all error functions tend
to zero.

Let $d = \delta_q(G) \geq D_\lambda$ and let $n = |V(G)|$. By
Inequality~\eqref{eq:bad-a}, the expected number of vertices outside $A$ is at
most $n/16$, while $\Ex(|B|) = \widetilde{c}n/\sqrt{d}$.
By Markov's inequality, we have
\[
 \Prob(|A| < n/2) \leq \frac{1}{8},
 \quad
 \Prob\left(|B| > \frac{4\widetilde{c}n}{\sqrt{d}}\right) \leq \frac{1}{4}.
\]
Consequently, with positive probability the information exposed so far satisfies
\begin{equation}\label{eq:AB-size}
 |A| \geq n/2,
 \quad
 |B| \leq \frac{4\widetilde{c}n}{\sqrt{d}}.
\end{equation}
Condition on any realization of the exposed information satisfying
\eqref{eq:AB-size}. The lists on
$V(G)\setminus B$ remain unexposed.

Let $\Phi_B$ be the set of proper $L$-colorings of $G[B]$.
If $\Phi_B = \varnothing$, then the full random assignment is already
uncolorable, regardless of the unexposed lists. We may therefore assume that
$\Phi_B \ne \varnothing$.

We now reveal the lists on $A$. Under this conditioning,
the components $L_i(a)$, for $a\in A$ and $i\in[q]$, remain mutually
independent and uniformly distributed in $\binom{X_i}{k_i}$. Indeed, whether
$a$ belongs to $A$ was determined without exposing any component of $L(a)$.

Fix $\phi \in \Phi_B$ and $a \in A$. For every $i \in [q]$, since $a$ is
surrounded at the empty state, by Lemma~\ref{lem:recursive-hit} with
$S = I = \varnothing$, we have
\[
 |\Use_i^\phi(a, \varnothing, \varnothing)|
 \geq \lfloor |X_i|/2\rfloor + 1 = \lfloor k_i^2/2\rfloor + 1.
\]
Hence, by the definition of $\alpha_i$ and independence over $i \in [q]$,
the probability that
$L_i(a) \subseteq \Use_i^\phi(a, \varnothing, \varnothing)$ for every
$i \in [q]$ is at least $\prod_{i = 1}^q \alpha_i$.
On this event, no color in $L(a)$ is available for extending $\phi$ to $a$.
By independence over $a \in A$, the probability that $\phi$ extends to every
vertex of $A$ is at most
\[
 \left(1 - \prod_{i = 1}^q \alpha_i\right)^{|A|}
 \leq \exp\left(-|A|\prod_{i = 1}^q \alpha_i\right).
\]
Moreover, $|\Phi_B| \leq |\lambda|^{|B|}$. By the inequalities
in~\eqref{eq:AB-size}, condition~\ref{cond:final-union}, and the union bound over all
$\phi \in \Phi_B$, the probability that some $\phi$ extends to every vertex of
$A$ is at most
\begin{align*}
 \exp\bigl(|B|\ln |\lambda|
           -|A|\prod_{i = 1}^q \alpha_i\bigr) \leq
 \exp\left(
 \frac{4\widetilde{c}n\ln |\lambda|}{\sqrt{d}}
 -\frac{n}{2}\prod_{i = 1}^q \alpha_i\right)
  \leq \exp\left(-\frac{n}{4}\prod_{i = 1}^q \alpha_i\right) < 1.
\end{align*}
We can therefore choose the previously unexposed lists on $A$ so that no
coloring of $G[B]$ extends to all of $A$. The lists already sampled on
$V(G) \setminus (A \cup B)$ are irrelevant. Hence some outcome of the original
single random experiment is an uncolorable $\lambda$-assignment. Since $G$
was arbitrary with $\delta_q(G)\geq D_\lambda$, it follows that
$f(\lambda)\leq D_\lambda<\infty$. We have proved the first assertion of
Theorem~\ref{thm:existence}.

\subsection{The asymptotic upper bound}
\label{subsec:threshold}

It remains to prove the asymptotic upper bound in
Theorem~\ref{thm:existence}. Throughout this subsection, $q$ is fixed, and
every $o(1)$ term tends to zero as $|\lambda|\to\infty$. We first estimate
the probabilities $\alpha_i$ and $\rho_I$, the recursive
constants $c_I$, and the error terms $\varepsilon_I(d)$.

The following elementary estimate is the binomial-ratio bound used in
Alon's proof~\cite{Alon2000}. We include its short proof for completeness.

\begin{lemma}\label{lem:half-binomial-ratio}
	For every positive integer $k$ and every integer
	$s\geq\lceil k^2/2\rceil$, $\binom{s}{k}/\binom{k^2}{k} \geq 2^{-(k+1)}$.
\end{lemma}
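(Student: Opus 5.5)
Since $\binom{s}{k}$ is nondecreasing in $s$, we may assume $s=\lceil k^2/2\rceil$. We then compare the two binomial coefficients factor by factor:
\[
 \frac{\binom{s}{k}}{\binom{k^2}{k}}=\prod_{t=0}^{k-1}\frac{s-t}{k^2-t}.
\]
The aim is to show this product is at least $2^{-(k+1)}$. We first dispose of small cases. For $k=1$ both sides equal $1\ge 1/4$. If $s-t\le 0$ for some $t\le k-1$, then $\lceil k^2/2\rceil\le k-1$, which happens for no positive integer $k$; so every factor is positive.

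Next we write each factor as
\[
 \frac{s-t}{k^2-t}\ \ge\ \frac{k^2/2-t}{k^2-t}=\frac12\cdot\frac{k^2-2t}{k^2-t}=\frac12\Bigl(1-\frac{t}{k^2-t}\Bigr).
\]
Hence the product is at least $2^{-k}\prod_{t=0}^{k-1}\bigl(1-\tfrac{t}{k^2-t}\bigr)$, and it suffices to show
\[
 \prod_{t=0}^{k-1}\Bigl(1-\frac{t}{k^2-t}\Bigr)\geq \frac12 .
\]
For $t\le k-1$ we have $k^2-t\ge k^2-k+1$. Weierstrass' inequality $\prod(1-x_t)\ge 1-\sum x_t$ (valid for $x_t\in[0,1]$) then gives
\[
 \prod_{t=0}^{k-1}\Bigl(1-\frac{t}{k^2-t}\Bigr)\ \ge\ 1-\frac{\sum_{t=0}^{k-1}t}{k^2-k+1}=1-\frac{k(k-1)/2}{k^2-k+1}.
\]
Since $k(k-1)/2<(k^2-k+1)/2$, the right-hand side exceeds $1/2$, which completes the argument.

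The only delicate point is the product estimate in the last display: the crude per-factor bound $\ge\tfrac12\cdot\tfrac{k^2-2k}{k^2}$ loses too much for small $k$. Weierstrass' inequality gives exactly the needed constant $1/2$, and this is where the extra factor $2^{-1}$ in $2^{-(k+1)}$ is spent. The rest is routine monotonicity and small-case checks.
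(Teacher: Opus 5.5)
Your proof is correct and follows the paper's argument essentially verbatim: the factorwise bound $\frac{s-t}{k^2-t}\ge\frac12\bigl(1-\frac{t}{k^2-t}\bigr)$, followed by $\prod(1-x_t)\ge 1-\sum x_t$. The only difference is that you use $k^2-t\ge k^2-k+1$ where the paper uses $k^2-k$, which gives a strict inequality and makes your separate $k=1$ check unnecessary (the paper needs that case split to avoid dividing by zero).
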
 
\begin{proof}
	The result is immediate for $k=1$, so assume that $k\geq2$. Since $s\geq k^2/2$, we have
	\[ 
	\frac{\binom{s}{k}}{\binom{k^2}{k}} =  \prod_{i=0}^{k-1}\frac{s-i}{k^2-i} \geq \frac{1}{2^k}\prod_{i=0}^{k-1}\frac{k^2-2i}{k^2-i} =
	\frac{1}{2^k}\prod_{i=0}^{k-1}\left(1-\frac{i}{k^2-i}\right) \geq  \frac{1}{2^k}\left(1-\sum_{i=0}^{k-1}\frac{i}{k^2-k}\right)  =2^{-(k+1)}. 
	\]
	The second inequality uses $\prod_i(1-x_i)\geq1-\sum_i x_i$ for $0\leq x_i\leq1$ and
	$k^2-i\geq k^2-k$.
\end{proof} 

By Lemma~\ref{lem:half-binomial-ratio}, applied to the ratios defining
$\alpha_i$ and $\rho_I$, we have
\begin{equation}\label{eq:alpha-rho-lower}
 \alpha_i\geq2^{-(k_i+1)},
 \quad
 \rho_I\geq2^{-|I|-\sum_{i\in I}k_i},
 \quad
 \rho_I\prod_{j\notin I}\alpha_j\geq2^{-q-|\lambda|}.
\end{equation}

We first estimate $\widetilde c$. For all sufficiently large $|\lambda|$, we
prove by induction on $q-|I|$ that
$c_I\leq(12\cdot 2^{2q+|\lambda|}\ln|\lambda|)^{q-|I|}$ for every nonempty
$I\subseteq[q]$.
Indeed, the case $I=[q]$ follows from $c_{[q]}=1$. Suppose that
$\varnothing\ne I\subsetneq[q]$ and this estimate holds for every $J\supsetneq I$. Since
$q-|J|\leq q-|I|-1$ and there are at most $2^q$ such sets $J$,
\[
 \hat c_I=\sum_{J\supsetneq I}c_J
 \leq2^q
 \bigl(12\cdot 2^{2q+|\lambda|}\ln|\lambda|\bigr)^{q-|I|-1}.
\]
By Equality~\eqref{eq:c-recursion} and Inequality~\eqref{eq:alpha-rho-lower},
$c_I\leq(12\cdot 2^{q+|\lambda|}\ln|\lambda|)\hat c_I$. By combining this
inequality with the preceding estimate, we obtain the required bound for
$c_I$ and complete the induction. Since there are at most
$2^q$ nonempty sets $I\subseteq[q]$,
\begin{equation}\label{eq:ctilde-explicit-bound}
 \widetilde c
 =\sum_{\varnothing\ne I\subseteq[q]}c_I
 \leq2^q
 \bigl(12\cdot 2^{2q+|\lambda|}\ln|\lambda|\bigr)^{q-1}.
\end{equation}

We then estimate $\varepsilon_I(d)$. Recall that the terminal case in its
recursive definition is
$\varepsilon_{[q]}(d)=\exp(-\rho_{[q]}\sqrt d)$. By
Inequality~\eqref{eq:alpha-rho-lower},
\[
 \varepsilon_{[q]}(d)
 \leq\exp(-2^{-q-|\lambda|}\sqrt d)
 \leq\exp\left(-\frac{2^{-q-|\lambda|}\sqrt d}{16}\right).
\]
Now let $\varnothing\ne I\subsetneq[q]$. We have $\hat c_I\geq c_{[q]}=1$ and
$\prod_{j\notin I}\alpha_j\leq1$. By Equality~\eqref{eq:c-recursion},
$c_I\rho_I\geq12\ln|\lambda|\geq1$ for all sufficiently large
$|\lambda|$. Also,
$2^{-q-|\lambda|}\leq1$ and $\ln|\lambda|\geq1$. It follows that
\[
 \max\left\{
 \exp\left(-\frac{\hat c_I\sqrt d}{3}\right),
 \exp\left(-\frac{c_I\rho_I\sqrt d}{16}\right),\quad
 \exp(-\hat c_I\sqrt d\ln|\lambda|)
 \right\}
 \leq\exp\left(-\frac{2^{-q-|\lambda|}\sqrt d}{16}\right).
\]
By the bound
$\binom{k_j^2}{\lceil k_j^2/2\rceil}\leq2^{k_j^2}$ in the remaining term of
that definition, we have
\[
 \varepsilon_I(d)
 \leq3\exp\left(-\frac{2^{-q-|\lambda|}\sqrt d}{16}\right)
 +4\sum_{j\notin I}2^{k_j^2}\varepsilon_{I\cup\{j\}}(d).
\]

The first term on the right is the same for every $I$, and the terminal term
$\varepsilon_{[q]}(d)$ is at most this term. Each substitution adds one new
element to the index set, so after at most $q-|I|\leq q-1$ substitutions,
every resulting term, for some $J\subseteq[q]\setminus I$, is at most
\[
 3\exp\left(-\frac{2^{-q-|\lambda|}\sqrt d}{16}\right)
 \prod_{j\in J}\bigl(4\cdot2^{k_j^2}\bigr).
\]
The same set $J$ may occur more than
once if its elements are added in different orders. For every such $J$,
\[
 \prod_{j\in J}4\cdot2^{k_j^2}
 \leq4^q2^{\sum_jk_j^2}
 \leq2^{|\lambda|^2+2q}.
\]
At each substitution there are at most $q$ choices for the new element, so
the full expansion contains at most $1+q+\cdots+q^q$ terms. Consequently,
\begin{equation}\label{eq:error-asymptotic-bound}
 \varepsilon_I(d)
 \leq2^{|\lambda|^2+O_q(1)}
 \exp\left(-\frac{2^{-q-|\lambda|}\sqrt d}{16}\right)
\end{equation}
for every nonempty $I\subseteq[q]$ and all sufficiently large $|\lambda|$.

\medskip
\noindent
\emph{Proof of the second part of Theorem~\ref{thm:existence}.}
Let
\[
 D_\lambda=\left\lceil
 2^{2q|\lambda|+2|\lambda|^{2/3}}
 \right\rceil.
\]
We show that, for all sufficiently large $|\lambda|$,
conditions~\ref{cond:final-error} and~\ref{cond:final-union}
hold whenever $d\geq D_\lambda$.
If $d\geq D_\lambda$, then
$\sqrt d\geq2^{q|\lambda|+|\lambda|^{2/3}}$.
It follows from Inequality~\eqref{eq:ctilde-explicit-bound} that
\[
 \frac{\widetilde c}{\sqrt d}
 \leq
 \frac{2^q(12\cdot 2^{2q}\ln|\lambda|)^{q-1}}
 {2^{|\lambda|+|\lambda|^{2/3}}}
 =o(1)
\]
and
\[
 \frac{16\widetilde c\ln|\lambda|}{\sqrt d}
 \leq
 \frac{2^{q+4}(12\cdot 2^{2q})^{q-1}(\ln|\lambda|)^q}
 {2^{|\lambda|+|\lambda|^{2/3}}}
 \leq2^{-q-|\lambda|}
 \leq\prod_{i=1}^q\alpha_i
\]
for all sufficiently large $|\lambda|$. We have therefore verified
condition~\ref{cond:final-union}.

The number of half-sets of $X_i$ is at most $2^{k_i^2}$. Hence, by
Inequality~\eqref{eq:error-asymptotic-bound},
\[ 
 \sum_{i=1}^q
 \binom{k_i^2}{\lceil k_i^2/2\rceil}\varepsilon_{\{i\}}(d) \leq2^{2|\lambda|^2+O_q(1)}
 \exp\left(
 -\frac{2^{-q}2^{(q-1)|\lambda|+|\lambda|^{2/3}}}{16}
 \right) =o(1). 
\]
By combining this estimate with $\widetilde c/\sqrt d=o(1)$, we verify
condition~\ref{cond:final-error}.
Therefore the argument in the preceding subsection applies to every graph
$G$ with $\delta_q(G)\geq D_\lambda$. Thus
\[
 f(\lambda)\leq D_\lambda
 =2^{(2q+o(1))|\lambda|}.
\]
This completes the proof of Theorem~\ref{thm:existence}.

\section{\texorpdfstring{\thinspace}{}Proof of
Theorem~\ref{thm:equal-upper}}\label{sec:partition}

Throughout this section, $q$ is fixed. Let $G$ be a graph with
$\delta_q(G)\geq d>1$. We follow the container method of
Saxton and Thomason~\cite{ST2015}. We first construct an auxiliary weighted
graph from the $(q+1)$-cliques of $G$. We assign to each edge the number of
such cliques containing it. By using these weights, we can apply the minimum
$q$-codegree condition in the container argument. We then prove a weighted
container lemma and apply it to the color classes arising from the $q$ color
pools. We finish by choosing random lists and showing that no possible tuple
of containers can correspond to a coloring. We also use a lemma about
matrices, which we prove at the end of the section.

\subsection{The auxiliary weighted graph}\label{subsec:weighted-cliques}

Let $H$ be the spanning subgraph of $G$ whose edges are those contained in at
least one $(q+1)$-clique of $G$. For every $uv\in E(H)$, let $w(uv)$ be the
number of $(q+1)$-cliques containing $u$ and $v$, and let $(H,w)$ be the
resulting weighted graph. We next verify that $H$ is nonempty. Let $A$ consist
of any vertex of $G$. If $|A|<q$, then we choose a $q$-set $A'$ containing $A$. Since
$\delta_q(G)\geq d>1$, the set $N_G(A')$ is nonempty. Let $x\in N_G(A')$.
Then $x$ is adjacent to every vertex of $A$, so $A\cup\{x\}$ is a larger
clique. We replace $A$ by $A\cup\{x\}$ and repeat this step until $|A|=q$.
The set $N_G(A)$ is nonempty, and hence $A\cup\{y\}$ is a $(q+1)$-clique for
any $y\in N_G(A)$. Therefore $H$ is nonempty.

For $v\in V(H)$ and $S\subseteq V(H)$, let
\[
 d_{(H,w)}(v)=\sum_{u\in N_H(v)}w(uv),
 \quad
 e_w(S)=\sum_{uv\in E(H[S])}w(uv).
\]
In particular, $e_w(H)=e_w(V(H))$. When $e_w(H)>0$, let
\[
 \mu_w(S)=\frac{1}{2e_w(H)}\sum_{v\in S}d_{(H,w)}(v).
\]
For $v\in V(H)$, let $\mu_w(v)=\mu_w(\{v\})$. For a multiset $M$ of
$V(H)$, let $\mu_w(M)=\sum_{v\in M}\mu_w(v)$, where vertices are counted with
multiplicity. Let $\mathcal Q$ be the family of $(q+1)$-cliques in $G$, and
for $v\in V(G)$, let $\mathcal Q(v)=\{K\in\mathcal Q:v\in K\}$. Then
\begin{equation}\label{eq:clique-loads}
 d_{(H,w)}(v)=q|\mathcal Q(v)|,
 \quad
 e_w(H)=\binom{q+1}{2}|\mathcal Q|,
 \quad
 \mu_w(v)=\frac{|\mathcal Q(v)|}{(q+1)|\mathcal Q|}.
\end{equation}

We first claim that for every edge $uv$ of $H$, $d_{(H,w)}(v) \geq (d-1)w(uv)$. We prove this by double-counting the ordered pairs $(K,K')$ of distinct $(q+1)$-cliques
satisfying $\{u,v\}\subseteq K$ and $K \setminus\{u\}\subseteq K'$. For each
$K \in \mathcal{Q}$ containing $u$ and $v$, let $S = K\setminus\{u\}$. Since
$|N_G(S)|\geq d$ and $u\in N_G(S)$, the set
$K'= S\cup\{x\}$ is a $(q+1)$-clique distinct from $K$ for every
$x\in N_G(S)\setminus\{u\}$. Thus there are at least
$(d-1)w(uv)$ such pairs. 
On the other hand, for a fixed $K'$ containing $v$ but not $u$, there are at
most $q$ $q$-subsets of $K'$ containing $v$ that can form a $(q+1)$-clique
together with $u$. Thus the number of pairs is at most
$q|\mathcal Q(v)|=d_{(H,w)}(v)$. So the claim follows.
 
By interchanging $u$ and $v$, we also have
$d_{(H,w)}(u)\geq(d-1)w(uv)$. Consequently,
\begin{equation}\label{eq:clique-edge-weight}
 w(uv)\leq
 \frac{\min\{d_{(H,w)}(u),d_{(H,w)}(v)\}}{d-1}.
\end{equation}

Similarly, we can show that for every $v\in V(H)$,
\begin{equation}\label{eq:clique-vertex-measure}
 \mu_w(v)\leq\frac{1}{d-1}.
\end{equation}
Indeed, we count the ordered pairs $(K,K')$ of distinct $(q+1)$-cliques
satisfying $v\in K$ and $K\setminus\{v\}\subseteq K'$. Their number is at
least $(d-1)|\mathcal Q(v)|$. For each fixed $K'$, there are at most $q+1$
choices for $K$, so their number is at most $(q+1)|\mathcal Q|$. By
Equalities~\eqref{eq:clique-loads}, we obtain
Inequality~\eqref{eq:clique-vertex-measure}.

\subsection{A weighted container lemma and local measure estimates}

In this subsection, we prove the weighted container lemma used in the counting
argument and two auxiliary estimates needed later.
Let $(H,w)$ be the auxiliary weighted graph in Subsection~\ref{subsec:weighted-cliques},
and order its vertex set $V(H)=V(G)=\{v_1,v_2,\ldots,v_n\}$ so that
\[
 d_{(H,w)}(v_1)\geq d_{(H,w)}(v_2)\geq\cdots\geq d_{(H,w)}(v_n).
\]
For $j\in[n]$, let $V_j=\{v_1,v_2,\ldots,v_j\}$.

\begin{lemma}\label{lem:online-container}
For every $0<\zeta<1$, there is a function $C$ from the subsets of $V(H)$ to
the subsets of $V(H)$ such that, for every independent set $I\subseteq V(H)$,
there exists $T\subseteq I$ with
\begin{enumerate}[label=(C\arabic*),ref=(C\arabic*),labelindent=2em,
	leftmargin=*]
\item\label{cond:cover} $I\subseteq C(T)$.
\item\label{cond:edges} $e_w(C(T)) \leq 2\zeta e_w(H)$.
\item\label{cond:measure} $\mu_w(T) \leq 1/(\zeta(d-1))$.
\end{enumerate}
Moreover, $C(T)\cap V_j=C(T\cap V_j)\cap V_j$ for every
$T\subseteq V(H)$ and every $j\in[n]$.
For each independent set $I$, choose one such set $T$ and denote it by
$T(I)$. 

We call $T(I)$ the fingerprint of $I$ and $C(T(I))$ its container.
\end{lemma}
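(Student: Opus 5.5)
The plan is to run a deterministic, online Kleitman--Winston style algorithm along the degree order $v_1,\dots,v_n$. For any $T\subseteq V(H)$ the algorithm outputs a set $C(T)$. For an independent set $I$, the fingerprint $T(I)$ will be exactly the set of vertices of $I$ that the algorithm selects when run on $I$.

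\textbf{The algorithm.} Fix $0<\zeta<1$ and a set $T\subseteq V(H)$. Maintain an available set $A$, initially $V(H)$, and process $j=1,\dots,n$ in order; if $v_j\notin A$, skip it. Otherwise let $A_j^+=A\cap\{v_{j+1},\dots,v_n\}$ be the current forward available set, and let
\[
\deg^+(v_j)=\sum_{u\in N_H(v_j)\cap A_j^+}w(uv_j).
\]
\begin{itemize}
\item If $v_j\in T$, keep $v_j$ and delete $N_H(v_j)\cap A_j^+$ from $A$.
\item If $v_j\notin T$ and $\deg^+(v_j)\ge \zeta\, d_{(H,w)}(v_j)$, delete $v_j$ from $A$.
\item Otherwise keep $v_j$.
\end{itemize}
Set $C(T)$ to be the final $A$. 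To define the fingerprint, run the same procedure with ``$v_j\in T$'' replaced by ``$v_j\in I$ and $\deg^+(v_j)\ge\zeta d_{(H,w)}(v_j)$'', and let $T(I)$ be the set of vertices so selected. By induction on $j$, running the $T$-driven procedure with $T=T(I)$ produces the same sets $A$ at every step. Hence the container is $C(T(I))$, and $T(I)\subseteq I$.

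\textbf{Checking (C1)--(C3).} For (C1), a vertex of $I$ leaves $A$ in only two ways. It can be deleted as a neighbour of a fingerprint vertex, which is impossible since $T\subseteq I$ and $I$ is independent. Or it can be deleted as a high-degree vertex outside the fingerprint, which is impossible since high-degree vertices of $I$ are put into $T(I)$. So $I\subseteq C(T)$.

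For (C2), charge each edge of $H[C(T)]$ to its earlier endpoint $v_j$. When $v_j$ was processed and kept, either it was low degree, so $\deg^+(v_j)<\zeta d_{(H,w)}(v_j)$, or it lay in $T$. In the latter case all its forward available neighbours were deleted, so it has no forward edges in $C(T)$. Since $A$ only shrinks, the forward edges of $v_j$ inside $C(T)$ have total weight at most $\zeta d_{(H,w)}(v_j)$. Summing over $j$ gives $e_w(C(T))\le \zeta\sum_v d_{(H,w)}(v)=2\zeta e_w(H)$.

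For (C3), each $v_j\in T$ is high degree, so $\deg^+(v_j)\ge\zeta d_{(H,w)}(v_j)$, and it deletes the forward neighbours $U_j=N_H(v_j)\cap A_j^+$. The sets $U_j$ are pairwise disjoint, since deleted vertices never return to $A$. Two further inputs convert weight into measure:
\begin{itemize}
\item by \eqref{eq:clique-edge-weight}, $w(uv_j)\le d_{(H,w)}(u)/(d-1)$;
\item the degree ordering gives $d_{(H,w)}(u)\le d_{(H,w)}(v_j)$ for $u$ later than $v_j$ (used only to justify the choice of order).
\end{itemize}
Together these give
\[
\sum_{u\in U_j}d_{(H,w)}(u)\ge (d-1)\deg^+(v_j)\ge \zeta(d-1)\,d_{(H,w)}(v_j).
\]
Summing over $v_j\in T$ and using disjointness, $\zeta(d-1)\sum_{v\in T}d_{(H,w)}(v)\le\sum_u d_{(H,w)}(u)=2e_w(H)$. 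Dividing by $2e_w(H)$ yields $\mu_w(T)\le 1/(\zeta(d-1))$.

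\textbf{The locality property.} The membership of $v_1,\dots,v_j$ in the final $A$ is decided at steps $1,\dots,j$, because step $i$ only deletes $v_i$ or vertices later than $v_i$. The decision at step $i\le j$ depends only on the current $A$ and on whether $v_i\in T$. By induction, therefore, the runs on $T$ and on $T\cap V_j$ agree through step $j$, which gives $C(T)\cap V_j=C(T\cap V_j)\cap V_j$.

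\textbf{Main obstacle.} The delicate point is (C3). The naive charging, which sums the removed edge weight over fingerprint vertices, only gives $\mu_w(T)\le 1/(2\zeta)$ and loses the factor $d-1$. The fix is to charge instead to the measure of the deleted neighbours. This is exactly where the clique-based edge-weight bound \eqref{eq:clique-edge-weight}, and hence the codegree hypothesis, enters.
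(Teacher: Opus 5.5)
Your proposal is correct and is essentially the paper's proof. It uses the same online prune/build algorithm along the degree order, the same forward-edge charging for (C2), the same disjoint deleted-neighbourhood argument with \eqref{eq:clique-edge-weight} for (C3), and the same step-by-step induction for the locality property. The only difference is bookkeeping: you keep a single shrinking set $A$ and skip deleted vertices, whereas the paper keeps $A$ and $\Gamma$ and sets $C(T)=(A\cup T)\setminus\Gamma$; this incidentally makes your (C2) hold for every $T$, not only for independent ones.
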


\begin{proof}
We use an algorithm similar to that of Saxton and Thomason~\cite{ST2015}. It has
two modes, called \emph{prune mode} and \emph{build mode}.
The set $A$ is the candidate part of the container. The auxiliary set $\Gamma$
consists of the vertices that have an earlier neighbor in $T$.

For the current auxiliary set $\Gamma$ and $i \in [n]$, let
\[
 F(v_i) = \{v_j \in V(H) : j > i,\ v_iv_j \in E(H),\ v_j \notin \Gamma\}.
\]
A vertex $v_i$ is \emph{active} if
\begin{equation}\label{eq:active-condition}
 \sum_{v_j \in F(v_i)} w(v_iv_j) \geq \zeta d_{(H,w)}(v_i).
\end{equation}
At each step, every set remains unchanged unless the algorithm explicitly
changes it.

\medskip
\noindent
\begin{minipage}{\textwidth}
\hrule
\smallskip
\noindent\textsc{Algorithm}
\par\smallskip
\noindent
\begin{minipage}[t]{0.48\textwidth}
\vspace{0pt}
\centering
Prune mode
\par\smallskip
\raggedright
\begin{enumerate}[leftmargin=1.5em,label=\arabic*.,itemsep=0.4em,topsep=0pt]
\item The input is an independent set $I$.
\item Initially, let $T = \varnothing$ and $\Gamma = \varnothing$.
\item For $i=1,2,\ldots,n$, calculate $F(v_i)$ using the current set $\Gamma$.
If $v_i \in I$ and $v_i$ is active, then add $v_i$ to $T$ and add all vertices
in $F(v_i)$ to $\Gamma$.
\item The output is $T(I) = T$.
\end{enumerate}
\end{minipage}
\hfill
\begin{minipage}[t]{0.48\textwidth}
\vspace{0pt}
\centering
Build mode
\par\smallskip
\raggedright
\begin{enumerate}[leftmargin=1.5em,label=\arabic*.,itemsep=0.4em,topsep=0pt]
\item The input is a set $T \subseteq V(H)$.
\item Initially, let $A = V(H)$ and $\Gamma = \varnothing$.
\item For $i=1,2,\ldots,n$, calculate $F(v_i)$ using the current set $\Gamma$.
If $v_i$ is active, then remove $v_i$ from $A$. If $v_i \in T$, then add all
vertices in $F(v_i)$ to $\Gamma$.
\item The output is $C(T) = (A \cup T) \setminus \Gamma$.
\end{enumerate}
\end{minipage}
\smallskip
\hrule
\end{minipage}
\medskip

Let $T=T(I)$ be the set produced by prune mode. We now show that prune mode
and build mode with input $T$ construct the same auxiliary set $\Gamma$ at
every step. We prove this by induction over the vertex order. Suppose that
the two auxiliary sets are the same before $v_i$ is examined. Then both modes
compute the same $F(v_i)$, so $v_i$ is active in both modes or neither. In
prune mode, $\Gamma$ is updated exactly when $v_i\in I$ and $v_i$ is active,
which is precisely the condition that puts $v_i$ into the final set $T(I)$.
In build mode, $\Gamma$ is updated exactly when $v_i\in T$. Hence the two
modes make the same update to $\Gamma$ at this step.

First, we prove~\ref{cond:cover}. Clearly $T(I)\subseteq I$, so it remains to
prove that $I\subseteq C(T(I))$. Let $v_i\in I$. We shall show that
$v_i\in C(T(I))$. Every vertex in $\Gamma$ has a neighbor of smaller index in
$T(I)$. Since $T(I)\subseteq I$ and $I$ is independent, no vertex of $T(I)$ is
adjacent to $v_i$. Hence $v_i$ is never added to $\Gamma$ in prune mode.
Since the two modes construct the same set $\Gamma$ at every step, $v_i$ is
never added to $\Gamma$ in build mode either. If $v_i\in T(I)$, then $v_i\in C(T(I))$ by definition. If
$v_i\notin T(I)$, then $v_i$ was inactive when it was examined in prune mode;
otherwise, it would have been added to $T(I)$. The build mode has the same
$\Gamma$ at that step, so $v_i$ is also inactive there and is not removed
from $A$. Thus $v_i\in A\setminus\Gamma\subseteq C(T(I))$, as required.
 
Next, we prove~\ref{cond:edges}. Let $A$ and $\Gamma$ be the final sets
produced by build mode with input $T$. Orient every edge of $H[C(T)]$ from $v_i$ to $v_j$
when $i < j$. Let $v_iv_j$ be an edge of $H[C(T)]$ with $i < j$. We claim that
$v_i \notin T$. Indeed, if $v_i \in T$, then $v_j \notin T$ because $T$ is
independent. Also $v_j \notin \Gamma$ because
$v_j \in C(T) = (A \cup T) \setminus \Gamma$. When build mode examined
$v_i$, we had
$v_j \in F(v_i)$, so $v_j$ would have been added to $\Gamma$. This is a
contradiction. Thus $v_i \notin T$. Since $v_i \in C(T)$, we have $v_i \in A$.

Since $v_i$ remains in the final set $A$, it was inactive when it was examined.
Therefore
\[
 \sum_{v_j \in F(v_i)} w(v_iv_j) < \zeta d_{(H,w)}(v_i).
\]
By counting each edge $v_iv_j\in E(H[C(T)])$ with $i<j$, we have
\[
 e_w(C(T))
 \leq \sum_{v_i \in A} \sum_{v_j \in F(v_i)} w(v_iv_j)
 < \zeta\sum_{v_i \in A} d_{(H,w)}(v_i)
 \leq 2\zeta e_w(H).
\]
Thus condition~\ref{cond:edges} holds.

Now we prove~\ref{cond:measure}. For distinct vertices $v_i,v_{j}\in T$,
the sets $F(v_i)$ and $F(v_{j})$ are disjoint, because each update only adds
vertices outside the current $\Gamma$. When $v_i$ entered $T$ in prune mode,
it was active, so by Inequality~\eqref{eq:active-condition} and
Inequality~\eqref{eq:clique-edge-weight}, we have
\[
 \sum_{v_j \in F(v_i)}d_{(H,w)}(v_j)
 \geq \sum_{v_j \in F(v_i)}(d-1)w(v_iv_j)
 = (d-1)\sum_{v_j \in F(v_i)} w(v_iv_j)
 \geq \zeta(d-1)d_{(H,w)}(v_i).
\]
By summing over $v_i \in T$ and using the disjointness of the sets $F(v_i)$,
we obtain
\[
 \zeta(d-1)\sum_{v_i \in T} d_{(H,w)}(v_i)
 \leq \sum_{v_i \in T} \sum_{v_j \in F(v_i)} d_{(H,w)}(v_j)
 \leq \sum_{v_i \in V(H)} d_{(H,w)}(v_i)
 = 2e_w(H).
\]
By dividing by $2e_w(H)$, condition~\ref{cond:measure} follows.

Finally, we prove the moreover part of the lemma. Let $T \subseteq V(H)$ and
$i \in [n]$, and
let $T' = T \cap V_i$. Apply build mode with inputs $T$ and $T'$, and let
$A,\Gamma$ and $A',\Gamma'$ denote the sets maintained in the two applications,
respectively. For every $j \leq i$, we have $v_j \in T$ if and only if
$v_j \in T'$. Initially, $A=A'=V(H)$ and $\Gamma=\Gamma'=\varnothing$.
Suppose that $A=A'$ and $\Gamma=\Gamma'$ before $v_j$ is examined. Then the
same set $F(v_j)$ is computed in both applications, so the update of $\Gamma$
and the decision on $v_j$ are also the same. By induction, $A=A'$ and
$\Gamma=\Gamma'$ immediately after $v_i$ is examined. At a later
step, suppose that $v_j$, where $j > i$, is examined. This step can only
remove $v_j$ from $A$ and add vertices $v_h$ with $h > j$ to
$\Gamma$. Hence
$A \cap V_i = A' \cap V_i$ and
$\Gamma \cap V_i = \Gamma' \cap V_i$. Therefore
\[
C(T) \cap V_i
 = \big((A \cap V_i) \cup (T \cap V_i)\big) \setminus (\Gamma \cap V_i)
 = \big((A' \cap V_i) \cup T'\big) \setminus (\Gamma' \cap V_i)
 = C(T') \cap V_i.
\]
Thus the moreover part holds, and we complete the proof of the lemma.
\end{proof}

For $\alpha>0$ and a multiset $S$ of $V(H)$, let
$D_\alpha(S)=\{v_j\in V(H):|S\cap V_j|\geq\alpha j\}$.
The next lemma bounds the weighted measure of $D_\alpha(S)$.

\begin{lemma}\label{lem:prefix}
Let $(H,w)$ be a weighted graph with $e_w(H) > 0$ and
$V(H) = \{v_1,v_2,\ldots,v_n\}$. Assume that
$d_{(H,w)}(v_1) \geq d_{(H,w)}(v_2) \geq \cdots
\geq d_{(H,w)}(v_n)$. Then, for every $\alpha>0$ and every multiset $S$ of
$V(H)$,
$\mu_w(D_\alpha(S)) \leq \mu_w(S)/\alpha$.
\end{lemma}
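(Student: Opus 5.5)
The plan is a maximal-inequality (Hardy--Littlewood type) argument. It exploits the fact that the vertices are ordered by nonincreasing weight $\mu_w(v_j)$. Write $m_j=\mu_w(v_j)$, so $m_1\geq m_2\geq\cdots\geq m_n\geq 0$. Write $s_j$ for the multiplicity of $v_j$ in $S$, so that $|S\cap V_j|=\sum_{i\leq j}s_i$ (counted with multiplicity) and $\mu_w(S)=\sum_i s_i m_i$. The aim is to show
\[
\sum_{v_j\in D_\alpha(S)} m_j\leq \frac{1}{\alpha}\sum_i s_i m_i .
\]

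The key step is a greedy charging (covering) argument on the line $\{1,\dots,n\}$. Let $j_1$ be the largest index in $D=D_\alpha(S)$. Since $|S\cap V_{j_1}|\geq \alpha j_1$, the interval $[1,j_1]$ carries $S$-mass at least $\alpha j_1$, which is at least $\alpha$ times the number of elements of $D$ in it. This suggests simply comparing sums directly, with no greedy decomposition, by Abel summation. Put $a_j=\mathbf 1[v_j\in D]$, $A_j=\sum_{i\leq j}a_i\leq j$, and $\Sigma_j=\sum_{i\leq j}s_i$. Summation by parts with $m_{n+1}=0$ gives
\[
\sum_j a_j m_j=\sum_{j}A_j(m_j-m_{j+1}),\qquad \sum_j s_jm_j=\sum_j \Sigma_j(m_j-m_{j+1}),
\]
and all differences $m_j-m_{j+1}$ are nonnegative. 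It therefore suffices to prove $A_j\leq \Sigma_j/\alpha$ for every $j$.

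To verify $A_j\leq\Sigma_j/\alpha$, take the largest $j'\leq j$ with $v_{j'}\in D$; if none exists, $A_j=0$. Then $A_j=A_{j'}\leq j'\leq |S\cap V_{j'}|/\alpha=\Sigma_{j'}/\alpha\leq \Sigma_j/\alpha$, using the definition of $D_\alpha(S)$ and the monotonicity of $\Sigma$. Substituting this into the Abel identities finishes the proof.

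I expect no real obstacle. The only points needing care are two bookkeeping details: the boundary term in the summation by parts, and the fact that $\mu_w$ is proportional to $d_{(H,w)}$, so the given degree ordering makes $m_j$ nonincreasing.
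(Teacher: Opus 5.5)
Your proposal is correct and matches the paper's proof. Both use Abel summation against the nonnegative differences $m_j-m_{j+1}$ to reduce to the prefix bound $|D_\alpha(S)\cap V_j|\leq |S\cap V_j|/\alpha$, which you check through the last element of $D_\alpha(S)$ in $V_j$ just as the paper bounds the largest index there; the abandoned greedy-charging remark can simply be deleted.
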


\begin{proof}
Let $\mu_i = \mu_w(v_i)$ for $i \in [n]$. Then
$\mu_1 \geq \mu_2 \geq \cdots \geq \mu_n \geq 0$ and
$\sum_i \mu_i = \mu_w(V(H)) = 1$.
Let $j \in [n]$ and $v_i \in D_\alpha(S) \cap V_j$. By the definition of
$D_\alpha(S)$, we have $|S \cap V_i| \geq \alpha i$. Since $i \leq j$, we
have $V_i \subseteq V_j$, and hence
$|S \cap V_i| \leq |S \cap V_j|$. Therefore
$i \leq |S \cap V_j|/\alpha$. Thus every vertex in
$D_\alpha(S) \cap V_j$ has index at most $|S \cap V_j|/\alpha$, and therefore
\begin{equation}\label{eq:prefix-count}
 |D_\alpha(S) \cap V_j| \leq \frac{|S \cap V_j|}{\alpha}.
\end{equation}

Let $\mu_{n+1} = 0$. Since
$\mu_i = \sum_{j = i}^n(\mu_j - \mu_{j+1})$, for any set
$A \subseteq V(H)$ we have
\begin{equation}\label{eq:prefix-identity}
 \mu_w(A) = \sum_{v_i \in A} \mu_i
 = \sum_{v_i \in A} \sum_{j = i}^n(\mu_j - \mu_{j+1})
 = \sum_{j = 1}^n(\mu_j - \mu_{j+1})|A \cap V_j|.
\end{equation}
For the last equality, we fix $j\in[n]$ and consider the term
$\mu_j-\mu_{j+1}$. It occurs once for each $v_i\in A$ with $i\leq j$, and
therefore occurs exactly $|A\cap V_j|$ times. The same argument applies when
$A$ is a multiset: each occurrence of $v_i$ contributes once to
$|A\cap V_j|$ for every $j\geq i$. Hence
Equality~\eqref{eq:prefix-identity} also holds for multisets, with
multiplicities. By Equality~\eqref{eq:prefix-identity} with
$A = D_\alpha(S)$, Inequality~\eqref{eq:prefix-count}, and
Equality~\eqref{eq:prefix-identity} with $A = S$, we have
\[
 \mu_w(D_\alpha(S))
 = \sum_{j = 1}^n(\mu_j - \mu_{j+1})|D_\alpha(S) \cap V_j|
 \leq \frac{1}{\alpha}\sum_{j = 1}^n(\mu_j - \mu_{j+1})|S \cap V_j|
 = \frac{\mu_w(S)}{\alpha}.
\]
This completes the proof.
\end{proof}

The next lemma shows that, for some $h$, a set of large weighted measure
contains a positive proportion of the first $h$ vertices. Moreover, the same
$h$ can be chosen so that the first $h$ vertices contain only a small total
number of elements from several sets of small weighted measure.

\begin{lemma}\label{lem:initial-segment}
Let $(H,w)$ be a weighted graph with $e_w(H) > 0$ and
$V(H) = \{v_1,v_2,\ldots,v_n\}$, where
$d_{(H,w)}(v_1) \geq d_{(H,w)}(v_2) \geq \cdots
\geq d_{(H,w)}(v_n)$. Let $s$ be a positive integer, let $r,\tau > 0$, and
let $0 < \beta \leq 1$. Suppose that $\mu_w(v_i)\leq 1/r$ for every $i\in[n]$. Let
$R,T_1,\ldots,T_s \subseteq V(H)$ satisfy $\mu_w(R) \geq \beta$ and
$\mu_w(T_j) \leq \tau$ for every $j \in [s]$. Let
$\kappa = \left\lfloor\beta r/(4(2 - \beta))\right\rfloor$. If
$\kappa \geq 1$,
then there is an index
$h > \kappa$ such that
\[
 |R \cap V_h| \geq \frac{\beta h}{2}
 \quad\text{and}\quad
 \sum_{j = 1}^s |T_j \cap V_h|
 < \frac{4s\tau(2 - \beta)}{\beta}h.
\]
\end{lemma}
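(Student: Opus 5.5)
The plan is a weighted averaging argument over prefix lengths, based on the identity~\eqref{eq:prefix-identity} from the proof of Lemma~\ref{lem:prefix}. Write $\mu_i=\mu_w(v_i)$, $\mu_{n+1}=0$, and $a_j=\mu_j-\mu_{j+1}\geq 0$. Then for every set or multiset $A$,
\[
 \mu_w(A)=\sum_{j=1}^n a_j|A\cap V_j| .
\]
In particular, taking $A=V(H)$ gives $\sum_j a_j j=1$. So I will regard $j\mapsto a_j j$ as a probability distribution on $[n]$. I will show that the indices $h$ violating any one of the three requirements carry total mass less than $1$ under this distribution. Then some $h$ with $a_h>0$ satisfies all three.

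\emph{Step 1 (the $R$-condition).} Let $G=\{j:|R\cap V_j|\geq\beta j/2\}$ and $m=\sum_{j\in G}a_j j$. Use $|R\cap V_j|\leq j$ for $j\in G$ and $|R\cap V_j|<\beta j/2$ for $j\notin G$. This gives
\[
 \beta\leq\mu_w(R)\leq m+\tfrac{\beta}{2}(1-m),
\]
hence $m\geq \beta/(2-\beta)$.

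\emph{Step 2 (excluding small indices).} Since $a_j\geq0$, we have $\sum_{j\leq\kappa}a_j j\leq\sum_j a_j\min(j,\kappa)$. The right-hand side equals $\sum_{i\leq\kappa}\mu_i$, by the same telescoping identity applied to $V_\kappa$. By the hypothesis $\mu_i\leq 1/r$, this is at most $\kappa/r\leq \beta/(4(2-\beta))$.

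\emph{Step 3 (the $T$-condition).} Let $c=4s\tau(2-\beta)/\beta$ and $Bad=\{j:\sum_{t}|T_t\cap V_j|\geq cj\}$. Then
\[
 c\sum_{j\in Bad}a_j j\leq\sum_j a_j\sum_t|T_t\cap V_j|=\sum_t\mu_w(T_t)\leq s\tau ,
\]
so $\sum_{j\in Bad}a_j j\leq s\tau/c=\beta/(4(2-\beta))$.

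Combining the three steps, the mass of $G\setminus(\{1,\ldots,\kappa\}\cup Bad)$ is at least $\beta/(2-\beta)-2\beta/(4(2-\beta))=\beta/(2(2-\beta))>0$. Hence there is some $h$ in this set with $a_h h>0$. Such an $h$ satisfies $h>\kappa$, $|R\cap V_h|\geq\beta h/2$, and $\sum_t|T_t\cap V_h|<ch$, as required.

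The only delicate point is Step~2, which uses the identity to turn the codegree bound $\mu_i\leq1/r$ into a bound on the mass of small indices. The assumption $\kappa\geq1$ only guarantees that the conclusion $h>\kappa$ is meaningful. I expect no real obstacle beyond checking these inequalities.
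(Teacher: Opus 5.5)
Your proof is correct, and it takes a genuinely different and somewhat shorter route than the paper's.

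The paper measures the bad \emph{vertices} with $\mu_w$ itself. It applies Lemma~\ref{lem:prefix}, the maximal-type inequality $\mu_w(D_\alpha(S))\leq\mu_w(S)/\alpha$. That inequality is proved from the same telescoping identity together with the counting bound $|D_\alpha(S)\cap V_j|\leq|S\cap V_j|/\alpha$. The paper applies it to $Z_1\subseteq D_{1-\beta/2}(V(H)\setminus R)$ and to $Z_2$, adds the bound $\mu_w(V_\kappa)\leq\kappa/r$, and picks a vertex $v_h$ outside the union.

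You instead put the probability measure $\nu(j)=j(\mu_j-\mu_{j+1})$ on prefix lengths. Under $\nu$, $\mu_w(A)$ is exactly the $\nu$-average of the prefix density $|A\cap V_j|/j$. Each requirement then becomes a one-line Markov or reverse-Markov estimate. The small-index step reduces to $\nu(\{1,\ldots,\kappa\})\leq\mu_w(V_\kappa)$, which follows from the nonnegativity of $\mu_j-\mu_{j+1}$.

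The constants come out identical: $1-\beta/(2-\beta)$, and $\beta/(4(2-\beta))$ twice. Your argument also makes Lemma~\ref{lem:prefix} unnecessary, since the paper uses it only here.

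The trade-off is in which measure the good indices are guaranteed to have positive mass. The paper's maximal inequality controls the bad set in the natural vertex measure $\mu_w$, which is a tool that could be reused elsewhere. Yours controls it in $\nu$, so your $h$ is guaranteed only among the jump points $\mu_h>\mu_{h+1}$. For the application, only the existence of a single such $h$ matters, so either works.
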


\begin{proof}
Let $Z_1$ be the set of vertices $v_j$ for which
$|R \cap V_j| < \beta j/2$. If $v_j \in Z_1$, then
$|(V(H) \setminus R) \cap V_j| > (1 - \beta/2)j$, so
$Z_1 \subseteq D_{1 - \beta/2}(V(H) \setminus R)$. By
Lemma~\ref{lem:prefix}, we have
\[
 \mu_w(Z_1) \leq \frac{\mu_w(V(H) \setminus R)}{1 - \beta/2}
 \leq \frac{1 - \beta}{1 - \beta/2}
 = 1 - \frac{\beta}{2 - \beta}.
\]
Let $Z_2$ be the set of vertices $v_i$ for which
$\sum_j |T_j \cap V_i| \geq 4s\tau(2 - \beta)i/\beta$. The union of
$T_1,\ldots,T_s$ is treated as a multiset. By Lemma~\ref{lem:prefix}, we
have
\[
 \mu_w(Z_2)
 \leq \frac{\sum_j \mu_w(T_j)}{4s\tau(2 - \beta)/\beta}
 \leq \frac{\beta}{4(2 - \beta)}.
\]
Finally, since $\mu_w(v_i) \leq 1/r$ for every $i$, we have
$\mu_w(V_\kappa) \leq \kappa/r \leq \beta/(4(2 - \beta))$. Therefore
\[
 \mu_w(Z_1 \cup Z_2 \cup V_\kappa)
 \leq 1 - \frac{\beta}{2 - \beta}
 + \frac{\beta}{4(2 - \beta)} + \frac{\beta}{4(2 - \beta)}
 = 1 - \frac{\beta}{2(2 - \beta)} < 1.
\]
Some vertex $v_h$ lies outside this union. Then $h > \kappa$, and both
desired inequalities hold.
\end{proof}

\subsection{A random list assignment and completion of the proof}

We now prove the upper bound in Theorem~\ref{thm:equal-upper}. Fix
$0<\varepsilon<1/2$, and let $k$ be sufficiently large. Let
\begin{equation}\label{eq:d and ell-choice}
 d=\left\lceil\exp\bigl((1+2\varepsilon)k\ln(1/\rho_q)\bigr)\right\rceil, \quad 
 \ell = \left\lfloor
 (1 - \varepsilon)\frac{\ln(d - 1)}{\ln(1/\rho_q)}
 \right\rfloor.
\end{equation}

Since $\ln(d-1) \geq (1+2\varepsilon)k\ln(1/\rho_q)-\ln 2$ and $(1-\varepsilon)(1+2\varepsilon)
=1+\varepsilon(1-2\varepsilon)>1$ for all sufficiently large $k$, we have
\[
\ell\geq
\left\lfloor
(1-\varepsilon)(1+2\varepsilon)k
-\frac{(1-\varepsilon)\ln2}{\ln(1/\rho_q)}
\right\rfloor
 \geq k.
\]

Let $0 < \zeta < 1$ be a constant depending only on $q$ and $\varepsilon$, to be
fixed later. We apply Lemma~\ref{lem:online-container} with this value of $\zeta$. For
each independent set $I$ of $H$, let $T(I)$ be the set produced by the prune
algorithm and let $C(T(I))$ be the corresponding container. Then
\[
 I \subseteq C(T(I)),
 \quad
 e_w(C(T(I))) \leq 2\zeta e_w(H),
 \quad
 \mu_w(T(I)) \leq \frac{1}{\zeta(d - 1)},
\]
and the moreover part of Lemma~\ref{lem:online-container} holds for this
order. Let $\mathcal{F}$ be the
family of all sets $T(I)$ produced by the prune algorithm.

Let $t = \ell^2$. For $i \in [q]$, let
$X_i = \{(i,j) : j \in [t]\}$. For every vertex $u$ and every
$i \in [q]$, choose $L_i(u)$ independently and uniformly from
$\binom{X_i}{\ell}$, and let $L(u)=\bigcup_{i=1}^qL_i(u)$.
Then $L$ is a random $\equalparts{\ell}{q}$-list assignment with color pools
$X_1,\ldots,X_q$. We shall prove that $G$ has no $L$-coloring with positive
probability.

A tuple $\mathbf{T} = (T_{i,j})_{i \in [q],\,j \in [t]}$ is \emph{valid} if
$T_{i,j} \in \mathcal{F}$ for all $i,j$. Here $i$ denotes the color pool
$X_i$,
and $j$ is the index of the color $(i,j) \in X_i$. The associated
\emph{container tuple} is
$\mathbf{C} = (C_{i,j})_{i \in [q],\,j \in [t]}$, where
$C_{i,j} = C(T_{i,j})$. For $U\subseteq V(G)$, let
$\mathbf T\cap U=(T_{i,j}\cap U)_{i\in[q],\,j\in[t]}$, and define
$\mathbf C\cap U$ similarly. We say that $\mathbf{C}$ is \emph{rejected at $u$} if
$u \notin C_{i,j}$ for every $(i,j)\in L(u)$.

Suppose that $L$ has a proper coloring $\phi$. For $i \in [q]$ and
$j \in [t]$, let $I_{i,j} = \{u : \phi(u) = (i,j)\}$. Each $I_{i,j}$ is
independent in $G$. Since
$E(H) \subseteq E(G)$, each $I_{i,j}$ is also independent in $H$. Let
$T_{i,j} = T(I_{i,j})$ and $C_{i,j} = C(T_{i,j})$. Then $\mathbf{T}$ is valid and
$I_{i,j} \subseteq C_{i,j}$ for all $i,j$. For every vertex $u$, we have
$\phi(u) = (i,j)$ for some $(i,j)\in L(u)$. Hence
$u \in I_{i,j} \subseteq C_{i,j}$, so $\mathbf{C}$ is not rejected at $u$.
Thus every proper $L$-coloring produces a valid container tuple that is not
rejected at any vertex. Consequently, it is enough to prove that, with
positive probability, every valid container tuple is rejected at some
vertex.

The total number of valid tuples may be too large for a direct union bound.
For each valid tuple $\mathbf{T}$, we shall select an index $h(\mathbf T)$ so
that $V_{h(\mathbf T)}$ contains many vertices at which the associated
container tuple can be rejected, while the total size of the sets
$T_{i,j}\cap V_{h(\mathbf T)}$ is small. We then retain only the restriction
$\mathbf{T}\cap V_{h(\mathbf T)}$. By the moreover part of
Lemma~\ref{lem:online-container}, this restriction determines the container
tuple on $V_{h(\mathbf T)}$. We can therefore count the possible restrictions
instead of the full valid tuples.

\subsubsection{A large rejection set and local fingerprint bounds}

Let $\mathbf{T}$ be a valid tuple and let $\mathbf{C}$ be its associated
container tuple. We first find many vertices at which $\mathbf{C}$ can be
rejected with sufficiently large probability. For $i \in [q]$ and
$u \in V(G)$, let
\[
 a_i(u) = \frac{1}{t}|\{j \in [t] : u \in C_{i,j}\}|,
 \quad b_i(u) = 1 - a_i(u).
\]
Thus $ta_i(u)$ is the number of containers indexed by colors in $X_i$ that
contain $u$, while $tb_i(u)$ is the number of these containers that avoid $u$.
For $i\in[q]$ and each $(q+1)$-clique $K\in\mathcal Q$, let
\[
 \xi_i(K) = \max\left\{\sum_{u \in K} a_i(u) - 1,0\right\}.
\]
With the convention that $\binom{z}{2}=0$ when $z<2$, we have
$\max\{z-1,0\}\leq\binom{z}{2}$ for every nonnegative integer $z$. Therefore,
\begin{equation}\label{eq:xi-bound}
 \xi_i(K)
 = \max\left\{\frac{1}{t}\sum_{j = 1}^t |K \cap C_{i,j}| - 1,0\right\}
 \leq \frac{1}{t}\sum_{j = 1}^t \binom{|K \cap C_{i,j}|}{2}.
\end{equation}
For fixed $i,j$, consider the pairs $(K,uv)$ such that
$K \in \mathcal Q$, $uv \in E(H[C_{i,j}])$ and $\{u,v\} \subseteq K$.
For each $K$, there are $\binom{|K \cap C_{i,j}|}{2}$ choices for $uv$.
For each $uv \in E(H[C_{i,j}])$, there are $w(uv)$ choices for $K$. Hence
\[
\sum_{K\in\mathcal Q}\binom{|K\cap C_{i,j}|}{2}
=\sum_{uv\in E(H[C_{i,j}])}w(uv)
=e_w(C_{i,j}).
\] 
By Inequality~\eqref{eq:xi-bound}, the above identity,
condition~\ref{cond:edges} of Lemma~\ref{lem:online-container} and
Equalities~\eqref{eq:clique-loads}, we have
\begin{equation}\label{eq:excess}
 \begin{aligned}
 \sum_{K \in \mathcal Q} \xi_i(K)
 & \leq \frac{1}{t}\sum_{K \in \mathcal Q} \sum_{j = 1}^t
       \binom{|K \cap C_{i,j}|}{2}\\
 &= \frac{1}{t}\sum_{j = 1}^t \sum_{K \in \mathcal Q}
       \binom{|K \cap C_{i,j}|}{2}\\
 &= \frac{1}{t}\sum_{j = 1}^t e_w(C_{i,j})
 \leq 2\zeta e_w(H)
 = 2\zeta\binom{q + 1}{2}|\mathcal Q|.
 \end{aligned}
\end{equation}
 
Let $\gamma = (2q\binom{q + 1}{2}\zeta)^{1/2}$, where $\zeta$ is
sufficiently small that $\gamma < 1$. By summing Inequality~\eqref{eq:excess} over $i\in[q]$, we have
\[
\sum_{K\in\mathcal Q}\sum_{i=1}^q\xi_i(K)
=\sum_{i=1}^q\sum_{K\in\mathcal Q}\xi_i(K)
\leq 2q\zeta\binom{q+1}{2}|\mathcal Q|
=\gamma^2|\mathcal Q|.
\]
Thus at most $\gamma|\mathcal Q|$ cliques $K\in\mathcal Q$ satisfy
$\sum_{i=1}^q\xi_i(K)>\gamma$, since otherwise their total contribution
would exceed $\gamma^2|\mathcal Q|$. Therefore, at least
$(1-\gamma)|\mathcal Q|$ cliques $K\in\mathcal Q$ satisfy
$\sum_{i=1}^q\xi_i(K)\leq\gamma$.
  
To find a vertex at which the container tuple is likely to be rejected, we
use the following lemma, whose proof is postponed until the end of the
section. Recall that $\rho_q$ is the unique $x\in(0,1)$ satisfying
$x=(1-x)^q$.

\begin{lemma}\label{lem:robust-array}
Let $X=(x_{ij})_{0\leq i\leq q,\,j\in[q]}$ be a $(q+1)\times q$ array with
entries in $[0,1]$, and let
$\xi_1,\ldots,\xi_q \geq 0$. Suppose that
$\sum_{i = 0}^q x_{ij} \leq 1 + \xi_j$ for every $j$. Then there exists some 
$i \in \{0,1,\ldots,q\}$ such that
\[
 \prod_{j = 1}^q(1 - x_{ij}) \geq \rho_q - \sum_{j = 1}^q\xi_j.
\]
\end{lemma}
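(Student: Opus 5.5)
The plan is to first remove the slack terms $\xi_j$, then prove the case $\xi_1=\cdots=\xi_q=0$ by a convexity-and-symmetrization argument that reduces the array to two free parameters.

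\emph{Reduction to $\xi\equiv 0$.} For each column $j$ with $\sum_i x_{ij}>1$, decrease entries of that column (keeping them in $[0,1]$) until the column sum is at most $1$. This gives an array $X'=(x'_{ij})$ with $0\le x'_{ij}\le x_{ij}$ and $\sum_i (x_{ij}-x'_{ij})\le \xi_j$; in particular $x_{ij}-x'_{ij}\le\xi_j$ for every $i$. The map $(z_1,\dots,z_q)\mapsto\prod_j(1-z_j)$ on $[0,1]^q$ is $1$-Lipschitz in each coordinate, since each partial derivative has absolute value at most $1$. Hence for every row $i$,
\[
\prod_j(1-x_{ij})\ \ge\ \prod_j(1-x'_{ij})-\sum_j\xi_j .
\]
It therefore suffices to show that every array with column sums at most $1$ has a row with $\prod_j(1-x_{ij})\ge\rho_q$.

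\emph{Convex reformulation.} Substitute $y_{ij}=-\ln(1-x_{ij})\in[0,\infty]$. The constraint becomes $\sum_i(1-e^{-y_{ij}})\le 1$ for each $j$, and the left side is a concave function of $y$, so the feasible set $\mathcal Y$ is convex. We must show $\min_{i}\sum_j y_{ij}\le\ln(1/\rho_q)$ for all $y\in\mathcal Y$. Entries with $x_{ij}=1$ give an infinite $y_{ij}$. I would handle these either by allowing $y_{ij}=\infty$ and noting that such a row has product $0$ and is never the witness, or by a limiting argument using closedness. Suppose, for contradiction, that $\min_i\sum_j y_{ij}>\ln(1/\rho_q)$. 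The function $y\mapsto\min_i\sum_j y_{ij}$ is concave, and the problem is invariant under the action of $S_q$ that permutes the columns and the rows $1,\dots,q$ simultaneously, keeping row $0$ fixed. Averaging $y$ over this group keeps it in $\mathcal Y$ by convexity. The $\min$ does not decrease, by concavity, and by invariance each permuted copy has the same value. So we may assume $y$, and hence $X$, is symmetric: $x_{0j}=a$, $x_{jj}=b$, and $x_{ij}=c$ for $i\ge1$, $i\ne j$, with $a+b+(q-1)c\le1$.

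\emph{Symmetric case.} Row $0$ has product $(1-a)^q$. Each row $i\ge1$ has product $(1-b)(1-c)^{q-1}\ge 1-b-(q-1)c\ge a$. The function $(1-a)^q$ is decreasing in $a$ and the function $a$ is increasing, and they cross exactly at $a=\rho_q$. Hence $\max\{(1-a)^q,a\}\ge\rho_q$, contradicting the assumption.

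The step I expect to need the most care is the symmetrization. One must check that the averaged point really stays feasible, which uses convexity of $\mathcal Y$ in the $y$-coordinates rather than in $x$. One must also check that concavity of $\min_i\sum_j y_{ij}$ is the right direction: averaging should not lower the minimum row sum. Finally, the infinite values of $y$ must be dealt with, as indicated above.
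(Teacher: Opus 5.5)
Your reduction to the case $\xi_1=\cdots=\xi_q=0$ is correct. It does the same job as the paper's rescale-and-pad step followed by Lemma~\ref{lem:product-estimate}. Your symmetric-case computation is also correct, and the concavity direction you worried about is fine, since averaging cannot lower $\min_i\sum_j y_{ij}$.

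The proof breaks at the sentence ``the left side is a concave function of $y$, so the feasible set $\mathcal Y$ is convex.'' For concave $f$ it is the superlevel set $\{f\ge 1\}$ that is convex, not the sublevel set $\{f\le 1\}$. Here $\mathcal Y$ is genuinely not convex, and your own averaging step exhibits this. Take $q=2$ with rows indexed $0,1,2$. Let column $1$ of $y$ be $(0,M,0)$ and column $2$ be $(M,0,0)$, i.e.\ $x_{11}=x_{02}=1-e^{-M}$ and all other entries $0$, so both column sums are below $1$. Averaging over the transposition that swaps rows $1,2$ and columns $1,2$ gives columns $(M/2,M/2,0)$ and $(M/2,0,M/2)$. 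Their $x$-column sums are $2(1-e^{-M/2})>1$ once $M>2\ln 2$. So nothing guarantees that the averaged array of a putative counterexample is still feasible. Your reduction to the form $(a,b,c)$, where you then invoke the true constraint $a+b+(q-1)c\le 1$, is therefore unjustified.

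Changing coordinates does not help. In the $x$-coordinates the constraints are linear, so averaging preserves feasibility. But $\sum_j\ln(1-x_{ij})$ is concave in $x$, so averaging can only raise row products and may destroy the counterexample.

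More decisively, suppose each column of $y$ could range over the convex hull of the feasible columns. Then the statement would be false. Take the symmetric array with $y_{0j}=y_{jj}=M/2$ for every $j$ and all other entries $0$. Each column is the midpoint of the feasible columns having a single nonzero entry $M$ in row $0$ and in row $j$, respectively. Yet every row sum is at least $M/2>\ln(1/\rho_q)$. Hence no convexity-based symmetrization can work, and a proof must exploit the non-convexity of the column constraint itself.

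The paper does this in Lemma~\ref{lem:exact-array} with the potential $g(x)=\ln(1-x)/\ln\rho_q$ on $[0,1-\rho_q]$ and $g\equiv 1$ on $[1-\rho_q,1]$. Convexity of $g$ on $[0,1-\rho_q]$ plus an extreme-point analysis gives $\sum_i g(x_{ij})\le 1+1/q$ for each column summing to $1$. If every row product were below $\rho_q$, every row would have $g$-sum at least $1$. Moreover, some row has no entry $\ge 1-\rho_q>1/2$, since each column contains at most one such entry. That row would have $g$-sum strictly above $1$, so the total would exceed $q+1$, a contradiction.
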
 

Let $K = \{u_0,u_1,\ldots,u_q\} \in \mathcal Q$ be such that
$\sum_{j = 1}^q \xi_j(K) \leq \gamma$. For each $j \in [q]$, we have
$\sum_{u \in K} a_j(u) \leq 1 + \xi_j(K)$. By
Lemma~\ref{lem:robust-array}, applied to the array
$(a_j(u_i))_{0 \leq i \leq q,\,j \in [q]}$, there is
$i_0 \in \{0,1,\ldots,q\}$ such that
\[
 \prod_{j = 1}^q b_j(u_{i_0})
 = \prod_{j = 1}^q(1 - a_j(u_{i_0}))
 \geq \rho_q - \sum_{j = 1}^q \xi_j(K)
 \geq \rho_q - \gamma.
\]
Let $R(\mathbf C)$ be the following set of vertices:
\[
 R(\mathbf{C})
 = \left\{u \in V(G) : \prod_{i = 1}^q b_i(u)
 \geq \rho_q - \gamma\right\}.
\]
At least $(1 - \gamma)|\mathcal Q|$ members of $\mathcal Q$ therefore
contain a vertex of $R(\mathbf{C})$. By double-counting the pairs $(u,K)$ with
$u\in R(\mathbf C)\cap K$, we have
\[
\sum_{u\in R(\mathbf C)}|\mathcal Q(u)|
=\sum_{K\in\mathcal Q}|K\cap R(\mathbf C)|
\geq(1-\gamma)|\mathcal Q|.
\]
Let $\beta = (1 - \gamma)/(q + 1)$.
By Equalities~\eqref{eq:clique-loads}, we have
\[
 \mu_w(R(\mathbf{C}))
 = \frac{\sum_{u \in R(\mathbf{C})}|\mathcal Q(u)|}
 {(q + 1)|\mathcal Q|}
 \geq \frac{1 - \gamma}{q + 1} = \beta.
\]

We now choose the appropriate set $V_h$ and at the same time control the
restriction of $\mathbf{T}$ to this set.
Recall that $\mathbf{T}$ consists of the $qt$ sets $T_{i,j}$ and that
$\mu_w(T_{i,j}) \leq 1/(\zeta(d - 1))$ for all $i \in [q]$ and $j \in [t]$. By
Inequality~\eqref{eq:clique-vertex-measure},
$\mu_w(v_i) \leq 1/(d - 1)$ for every $i\in[n]$. Let
\[
 \kappa = \left\lfloor\frac{\beta(d - 1)}{4(2 - \beta)}\right\rfloor,
 \quad
 \theta = \frac{4qt(2 - \beta)}{\beta\zeta(d - 1)}.
\]
For sufficiently large $d$, we have $\kappa \geq 1$. We apply
Lemma~\ref{lem:initial-segment} with $r = d - 1$,
$\tau = 1/(\zeta(d - 1))$ and $s = qt$. The set $R$ in that lemma is
$R(\mathbf C)$, and $T_1,\ldots,T_s$ are the $qt$ sets $T_{i,j}$, listed
separately in any order. There is an index
$h > \kappa$ such that
\begin{equation}\label{eq:local-restrictions}
 |R(\mathbf{C}) \cap V_h| \geq \frac{\beta h}{2},
 \quad
 \sum_{i = 1}^q \sum_{j = 1}^t |T_{i,j} \cap V_h| < \theta h.
\end{equation}
There may be more than one index satisfying~\eqref{eq:local-restrictions}.
For each valid tuple $\mathbf{T}$, let $h(\mathbf{T})$ be the smallest such
index.

For each possible value of $h(\mathbf T)$, consider the valid tuples having
this value. We now count their possible restrictions
$\mathbf T\cap V_{h(\mathbf T)}$. Each such restriction corresponds to the
set of triples $(i,j,u)\in[q]\times[t]\times V_{h(\mathbf T)}$ for which
$u\in T_{i,j}\cap V_{h(\mathbf T)}$. By the second inequality
in~\eqref{eq:local-restrictions}, we have
\[
 \left|\{(i, j, u) : i \in [q],\ j \in [t],
 u \in T_{i,j} \cap V_{h(\mathbf T)}\}\right|
 = \sum_{i = 1}^q \sum_{j = 1}^t
 |T_{i,j} \cap V_{h(\mathbf T)}| < \theta h(\mathbf T).
\]
There are $qth(\mathbf T)$ triples in
$[q]\times[t]\times V_{h(\mathbf T)}$. Since $h(\mathbf T)>\kappa$, for all
sufficiently large $d$ we have
$1\leq\theta h(\mathbf T)\leq qth(\mathbf T)/2$. By the standard binomial
sum estimate, the number of possible restrictions is at most
\begin{equation}\label{eq:tuple-count}
 \begin{aligned}
 \sum_{z < \theta h(\mathbf T)} \binom{q t h(\mathbf T)}{z}
 & \leq \left(\frac{\ee q t h(\mathbf T)}
 {\theta h(\mathbf T)}\right)^{\theta h(\mathbf T)}
 = \exp\left\{\theta h(\mathbf T)\ln \frac{\ee q t}{\theta}\right\}\\
 &= \exp\left\{\frac{4q(2 - \beta)}{\beta\zeta}
 \frac{h(\mathbf T)t}{d - 1}
 \ln \frac{\ee\beta\zeta(d - 1)}{4(2 - \beta)}\right\}\\
 &= \exp\left\{\frac{4q(2 - \beta)}{\beta\zeta}
 \frac{h(\mathbf T)t}{d - 1}
 \left(\ln(d - 1) + \ln \frac{\ee\beta\zeta}{4(2 - \beta)}\right)\right\}\\
 & \leq \exp\left\{c_{q,\zeta}
 \frac{h(\mathbf T)t\ln(d - 1)}{d - 1}\right\}.
 \end{aligned}
\end{equation}
Here $c_{q,\zeta} = 8q(2 - \beta)/(\beta\zeta)$ depends only on $q$ and
$\zeta$.

\subsubsection{Rejection probability and the union bound}

For a valid tuple $\mathbf T$ and $h\in[n]$, let
$\mathcal A(\mathbf T,h)$ be the event that $\mathbf C\cap V_h$ is not
rejected at any vertex of $R(\mathbf C)\cap V_h$, where $\mathbf C$ is the
container tuple determined by $\mathbf T$. By the moreover part of
Lemma~\ref{lem:online-container}, for every $i\in[q]$ and $j\in[t]$, we have
$C_{i,j}\cap V_h=C(T_{i,j}\cap V_h)\cap V_h$. Therefore
$\mathbf T\cap V_h$ determines $\mathbf C\cap V_h$,
$R(\mathbf C)\cap V_h$, and the event $\mathcal A(\mathbf T,h)$.

Since a proper $L$-coloring produces a valid container tuple that is not
rejected at any vertex, $L$ can be colorable only if
$\mathcal A(\mathbf T,h(\mathbf T))$ occurs for some valid tuple $\mathbf T$.
We shall show that the probability of this is less than one.

We first estimate the probability of each event. Let $\delta = (\ell - 1)/t$.
We begin with a binomial ratio estimate. For any $y$ such that
$ty \in \{0,1,\ldots,t\}$, we have
\begin{equation}\label{eq:binomial-ratio-bound}
 \frac{\binom{ty}{\ell}}{\binom{t}{\ell}}
 \geq \max\{y - \delta,0\}^{\ell}.
\end{equation}
Indeed, if $ty < \ell$, then $ty \leq \ell - 1$ and hence $y \leq \delta$, so both
sides are zero. If $ty \geq \ell$, then $ty-i\geq ty-\ell+1$ and
$t-i\leq t$ for every $0\leq i\leq\ell-1$. Therefore
\[
 \frac{\binom{ty}{\ell}}{\binom{t}{\ell}}
 = \frac{ty(ty-1)\cdots(ty-\ell+1)}
 {t(t-1)\cdots(t-\ell+1)}
 = \prod_{i = 0}^{\ell - 1}\frac{ty - i}{t - i}
 \geq \left(\frac{ty - \ell + 1}{t}\right)^{\ell}
 = (y - \delta)^{\ell}.
\]

Let $\mathbf T$ be a valid tuple. For any vertex
$u\in R(\mathbf{C})\cap V_{h(\mathbf T)}$ and each $i \in [q]$, there are
exactly $tb_i(u)$
colors $(i,j)\in X_i$ for which $u \notin C_{i,j}$. Since $L_i(u)$ is chosen
uniformly from $\binom{X_i}{\ell}$, the probability that
$u \notin C_{i,j}$ for every $(i,j)\in L_i(u)$ is
$\binom{tb_i(u)}{\ell}/\binom{t}{\ell}$. By
Inequality~\eqref{eq:binomial-ratio-bound}, this probability is at least
$\max\{b_i(u) - \delta,0\}^{\ell}$.

To combine these estimates over the $q$ color pools, we use the following
elementary product inequality.

\begin{lemma}\label{lem:product-estimate}
Let $0 \leq x_i \leq 1$ and $y_i \geq 0$ for $i \in [q]$. Then
\[
 \prod_{i = 1}^q \max\{x_i - y_i,0\}
 \geq
 \prod_{i = 1}^q x_i - \sum_{i = 1}^q y_i.
\]
\end{lemma}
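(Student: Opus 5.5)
We prove Lemma~\ref{lem:product-estimate} by induction on $q$, reducing first to the case in which every factor on the left is positive. If $x_i<y_i$ for some $i$, then the left-hand side is $0$. The right-hand side is then at most
\[
\prod_{i}x_i-y_i\leq x_i-y_i<0,
\]
since $\prod_i x_i\leq x_i$ (all $x_i\in[0,1]$) and $y_{i'}\geq 0$ for the other indices. So the inequality holds in this case.

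We may therefore assume that $x_i\geq y_i$ for every $i$, so that $\max\{x_i-y_i,0\}=x_i-y_i\in[0,1]$. The case $q=1$ is an equality. For the induction step, write $P=\prod_{i<q}(x_i-y_i)$ and $Q=\prod_{i<q}x_i$. Both lie in $[0,1]$, and by the induction hypothesis $P\geq Q-\sum_{i<q}y_i$. Then
\[
P(x_q-y_q)\;\geq\; P x_q - y_q \;\geq\; \Bigl(Q-\sum_{i<q}y_i\Bigr)x_q - y_q \;\geq\; Qx_q-\sum_{i<q}y_i-y_q .
\]
The first step uses $P\leq1$. The second multiplies the induction hypothesis by $x_q\geq0$. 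The third uses $x_q\leq 1$ together with $\sum_{i<q}y_i\geq0$. The last expression is the claimed right-hand side, which completes the induction.

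An equivalent route is the telescoping identity
\[
\prod_i x_i-\prod_i(x_i-y_i)=\sum_i y_i\prod_{j<i}(x_j-y_j)\prod_{j>i}x_j,
\]
in which every product on the right lies in $[0,1]$. The proof is routine; the only point needing care is the zero-truncation case handled first.
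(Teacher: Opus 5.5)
Your proof is correct, and it is essentially the paper's argument: your induction step is the telescoping identity you mention at the end, unrolled one factor at a time, and the paper's proof is exactly that identity. The only difference is that the paper skips your preliminary case split by setting $z_i=\max\{x_i-y_i,0\}$ and using $0\leq x_i-z_i\leq y_i$ directly in the telescoping sum.
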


\begin{proof}
Let $z_i=\max\{x_i-y_i,0\}$. Then $0\leq z_i\leq x_i\leq1$ and
$x_i-z_i\leq y_i$ for every $i\in[q]$. The difference between the two
products can be written as
\[
 \prod_{i=1}^q x_i-\prod_{i=1}^q z_i
 =\sum_{i=1}^q \left[(x_i-z_i)
   \left(\prod_{j<i}z_j \right) \left(\prod_{j>i}x_j \right) \right]
 \leq\sum_{i=1}^q y_i.
\]
Because all the factors in the two products are at most one, the desired
inequality follows.
\end{proof}

By applying Lemma~\ref{lem:product-estimate} with $x_i=b_i(u)$ and
$y_i=\delta$, and by the definition of $R(\mathbf{C})$, we have
\[
 \prod_{i = 1}^q \max\{b_i(u) - \delta,0\}
 \geq \prod_{i = 1}^q b_i(u) - q\delta
 \geq \rho_q - \gamma - q\delta.
\]
Since the lists in different pools are independent, the probability that
$\mathbf{C}$ is rejected at $u$ is at least
\[
 \left[\prod_{i = 1}^q
 \max\{b_i(u) - \delta,0\}\right]^{\ell}
 \geq (\rho_q - \gamma - q\delta)^{\ell}.
\]

Since $\gamma\to0$ as $\zeta\to0$, we choose $\zeta$ sufficiently small so that $\rho_q - 2\gamma > 0$ and
\begin{equation}\label{eq:zeta-choice}
 (1 - \varepsilon)
 \frac{\ln(1/(\rho_q - 2\gamma))}{\ln(1/\rho_q)}
 < 1 - \frac{\varepsilon}{2}.
\end{equation}
By Equality~\eqref{eq:d and ell-choice}, $\ell \to \infty$ as $d \to \infty$. Since
$t = \ell^2$, we have
$q\delta = q(\ell - 1)/t \leq q/\ell = o(1)$. Thus $q\delta \leq \gamma$ for all
sufficiently large $d$. Let $p=(d-1)^{-1+\varepsilon/2}$.
By Equality~\eqref{eq:d and ell-choice} and
Inequality~\eqref{eq:zeta-choice}, and since
$0 < \rho_q - 2\gamma < 1$, we have
\begin{equation}\label{eq:reject-power}
 \begin{aligned}
 (\rho_q - \gamma - q\delta)^{\ell}
 & \geq (\rho_q - 2\gamma)^{\ell}\\
 & \geq (\rho_q - 2\gamma)^{
 (1 - \varepsilon)\ln(d - 1)/\ln(1/\rho_q)}\\
 &= \exp\left\{-(1 - \varepsilon)
 \frac{\ln(1/(\rho_q - 2\gamma))}{\ln(1/\rho_q)}
 \ln(d - 1)\right\}\\
 & \geq \exp\left\{\left(-1 + \frac{\varepsilon}{2}\right)
 \ln(d - 1)\right\}\\
 &= (d-1)^{-1+\varepsilon/2} = p.
 \end{aligned}
\end{equation}
By the first inequality in~\eqref{eq:local-restrictions}, the set
$R(\mathbf{C})\cap V_{h(\mathbf T)}$ has size at least
$\beta h(\mathbf T)/2$. The random lists at
distinct vertices are independent. By Inequality~\eqref{eq:reject-power},
$\mathbf{C}$ is rejected at each vertex of this set with probability at least
$p$. Hence
\begin{equation}\label{eq:tuple-survive}
 \Prob\bigl(\mathcal A(\mathbf T,h(\mathbf T))\bigr)
 \leq (1 - p)^{\beta h(\mathbf T)/2}
 \leq \exp\{-\beta h(\mathbf T)p/2\}.
\end{equation}

Among the valid tuples with the same value of $h(\mathbf T)$,
Inequality~\eqref{eq:tuple-count} bounds the number of distinct events
$\mathcal A(\mathbf T,h(\mathbf T))$, while
Inequality~\eqref{eq:tuple-survive} bounds the probability of each such
event. Before applying the union bound, we show that the exponent in
Inequality~\eqref{eq:tuple-count} is at most
$\beta h(\mathbf T)p/4$.

Recall that $\zeta$ is a fixed positive constant depending only on $q$ and
$\varepsilon$, and that
\[
 \gamma=\left(2q\binom{q+1}{2}\zeta\right)^{1/2},
 \quad
 \beta=\frac{1-\gamma}{q+1}.
\]
Hence $\beta$ and $c_{q,\zeta}$ are also fixed positive constants depending
only on $q$ and $\varepsilon$. By the definitions of $t$ and $p$, and the
choice of $\ell$ in~\eqref{eq:d and ell-choice}, we have
\[
 \frac{c_{q,\zeta}t\ln(d-1)/(d-1)}{\beta p/4}
 \leq
 \frac{4c_{q,\zeta}(1-\varepsilon)^2}
 {\beta\ln^2(1/\rho_q)}
 \frac{\ln^3(d-1)}{(d-1)^{\varepsilon/2}}
 \longrightarrow 0
 \quad\text{as }d\to\infty.
\]
Thus, for all sufficiently large $d$ and every valid tuple $\mathbf T$,
\begin{equation}\label{eq:count-exponent-comparison}
 c_{q,\zeta}\frac{h(\mathbf T)t\ln(d-1)}{d-1}
 \leq\frac{\beta h(\mathbf T)p}{4}.
\end{equation}

We now group the valid tuples according to the value of $h(\mathbf T)$.
Since $\kappa<h(\mathbf T)\leq n$, the union bound,
Inequalities~\eqref{eq:tuple-count} and~\eqref{eq:tuple-survive}, and
Inequality~\eqref{eq:count-exponent-comparison} imply that
\[
 \begin{aligned}
 \Prob\left(\bigcup_{\mathbf T\text{ valid}}
 \mathcal A(\mathbf T,h(\mathbf T))\right)
 &\leq \sum_{\kappa<h\leq n}
 \exp\left\{c_{q,\zeta}\frac{ht\ln(d-1)}{d-1}\right\}
 \exp\{-\beta hp/2\}\\
 &\leq \sum_{\kappa<h\leq n}\exp\{-\beta hp/4\}
 \leq \sum_{h>\kappa}\exp\{-\beta hp/4\}.
 \end{aligned}
\]
This is the tail of a geometric series. For all sufficiently large $d$, we
have $1 - \exp\{-\beta p/4\} \geq \beta p/8$ and
$\kappa \geq \beta(d - 1)/(8(2 - \beta))$. Therefore,
\[
 \begin{aligned}
 \sum_{h > \kappa}\exp\{-\beta h p/4\}
 &= \frac{\exp\{-\beta(\kappa + 1)p/4\}}
 {1 - \exp\{-\beta p/4\}}
 \leq \frac{8}{\beta p}\exp\{-\beta\kappa p/4\}\\
 & \leq \frac{8}{\beta}(d - 1)^{1 - \varepsilon/2}
 \exp\left\{-\frac{\beta^2}{32(2 - \beta)}
 (d - 1)^{\varepsilon/2}\right\} \longrightarrow 0
 \quad\text{as }d\to\infty.
 \end{aligned}
\]

Thus, with positive probability, none of the events
$\mathcal A(\mathbf T,h(\mathbf T))$ occurs. For such a list assignment $L$,
every valid container tuple is rejected at a vertex of
$R(\mathbf C)\cap V_{h(\mathbf T)}$. Since a proper $L$-coloring would produce
a valid container tuple that is not rejected at any vertex, $L$ is
uncolorable.
Since $\ell\geq k$, the graph $G$ is not $\equalparts{k}{q}$-choosable.
Since $G$ was arbitrary,
\[
 f(\equalparts{k}{q})
 \leq\left\lceil\exp\bigl((1+2\varepsilon)k\ln(1/\rho_q)\bigr)\right\rceil
\]
for every sufficiently large $k$.
Since $\varepsilon$ is arbitrary,
\[
 f(\equalparts{k}{q})\leq\rho_q^{-(1+o(1))k}.
\]
We have now proved Theorem~\ref{thm:equal-upper} apart from
Lemma~\ref{lem:robust-array}, which we prove next.

\subsection{Proof of Lemma~\ref{lem:robust-array}}\label{subsec:array-proof}

Now we prove Lemma~\ref{lem:robust-array}. We need the following elementary
lemma for arrays whose column sums are one.

\begin{lemma}\label{lem:exact-array}
Let $P = (p_{ij})$ be a $(q + 1)\times q$ array with nonnegative entries and
$\sum_{i = 0}^q p_{ij} = 1$ for every $j \in [q]$. Then some row
$i \in \{0,1,\ldots,q\}$ satisfies
\begin{equation}\label{eq:allocation}
        \prod_{j = 1}^q(1 - p_{ij}) \geq \rho_q.
\end{equation}
The constant $\rho_q$ is best possible, and equality is attained by the array
with $p_{0j} = \rho_q$, $p_{jj} = 1 - \rho_q$, and $p_{ij} = 0$ whenever
$i \notin \{0,j\}$.
\end{lemma}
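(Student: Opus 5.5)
The plan is to prove Lemma~\ref{lem:exact-array} by a compactness and first-order optimality argument. The set $\mathcal P$ of nonnegative $(q+1)\times q$ arrays with all column sums equal to $1$ is compact. The function
\[
 \Psi(P)=\max_{0\le i\le q}\prod_{j=1}^q(1-p_{ij})
\]
is continuous on $\mathcal P$, so it attains its minimum at some $P^*$. It therefore suffices to show $\Psi(P^*)\ge\rho_q$. Sharpness is immediate from the displayed array. Row $0$ has product $(1-\rho_q)^q=\rho_q$. Row $j$ has product $1-(1-\rho_q)=\rho_q$.

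The case $q=1$ is trivial: the two rows have products $1-p_{01}$ and $1-p_{11}$, which sum to $1$, so one of them is at least $1/2=\rho_1$. Assume $q\ge2$, so $\rho_q<1/2$.

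\textbf{Step 1: structure of a minimizer.} Among all minimizers, choose $P^*$ to have the fewest rows attaining the maximum $m=\Psi(P^*)$. Suppose some column $j$ has $p_{ij}>0$ for a row $i$ with product exactly $m$, and some row $i'$ has product strictly less than $m$.
\begin{itemize}
\item Move a tiny amount of mass in column $j$ from row $i$ to row $i'$.
\item This strictly increases the product of row $i$ and keeps row $i'$ below $m$.
\item It either lowers $\Psi$ or reduces the number of maximal rows, a contradiction.
\end{itemize}
Hence each column's mass sits only on rows of product exactly $m$, unless every row attains $m$. Rows carrying no mass have product $1>m$ (note $m<1$, since some row carries mass). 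So the rows split into two kinds: \emph{tight} rows, each with product $m$, which carry all the mass; and empty rows, which are all-zero. If there is an empty row, the minimizer is not yet in the form we want. Moving a little mass onto the empty row lowers the products of all tight rows in that column, contradicting minimality, unless a tight row cannot decrease this way. So we may in fact assume all $q+1$ rows are tight with common product $m$.

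\textbf{Step 2: Lagrange conditions.} Now minimize $m$ subject to
\[
 \sum_j \ln(1-p_{ij})=\ln m \quad\text{for every }i,
 \qquad
 \sum_i p_{ij}=1 \quad\text{for every }j.
\]
Because $\ln(1-x)$ is concave, the KKT conditions at $P^*$ give row weights $w_i\ge0$ and column multipliers $\lambda_j$ such that
\[
 \frac{w_i}{1-p_{ij}}=\lambda_j \ \text{ whenever } p_{ij}>0,
 \qquad
 \frac{w_i}{1-p_{ij}}\le\lambda_j \ \text{ when } p_{ij}=0 .
\]
In each column, the positive entries therefore form a set of rows on which $1-p_{ij}$ is proportional to $w_i$. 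This gives a finite combinatorial description of the candidate extremal arrays.

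\textbf{Step 3: the main obstacle.} The hardest step is to show, from these conditions, that the worst configuration is the one in the statement, or a permutation of it: one ``shared'' row with equal entries $x$ in every column, and $q$ private rows, each supported on a single column with entry $1-x$. I would first show that every row has at most one large entry. Since $m\le\rho_q<1/2$, each tight row has $\sum_j p_{ij}\ge 1-m$, and the total mass is only $q$ spread over $q+1$ rows. These two facts should force the following: $q$ rows each essentially own one column, and the remaining row spreads evenly, by symmetry of the Lagrange conditions. Equalizing the products then gives
\[
 1-(1-x)=(1-x)^q,
\]
so $x=\rho_q$ and $m=\rho_q$. If this KKT case analysis becomes messy, I would instead try a direct smoothing argument: replace two rows' entries in a column by a more ``polarized'' split, and check that the minimum over rows of the product does not increase.
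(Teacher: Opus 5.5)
Your Step~3 carries the whole content of the lemma, and it is not carried out. The claims that every row has at most one large entry, that $q$ rows each own a column, and that the remaining row is constant are stated as expectations (\emph{should force}, \emph{I would try}), not proved.

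The quantitative facts you do supply are too weak. From $\sum_j p_{ij}\ge 1-m$ for each of the $q+1$ tight rows, and total mass $q$, you get only $(q+1)(1-m)\le q$, that is, $m\ge 1/(q+1)$. For $q\ge 2$, Bernoulli's inequality gives $(q/(q+1))^q>1/(q+1)$, so $1/(q+1)<\rho_q$ and this bound falls short.

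The Lagrange conditions of Step~2 do not close the gap cheaply either. You are minimizing $\max_i\sum_j\ln(1-p_{ij})$, a maximum of concave functions, so the problem is nonconvex and KKT is only a necessary condition. You would have to classify every admissible support pattern (rows with several positive entries, columns with three or more positive entries, and so on) and bound the value of each. An appeal to symmetry cannot replace this, since minimizers of a symmetric nonconvex problem need not be symmetric; indeed the extremal array itself breaks the row symmetry. Once the support pattern of the statement is known, tightness alone gives $p_{0j}=1-p_{jj}=m$ and $(1-m)^q=m$. So the support classification you skip is exactly the hard part.

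The paper avoids this case analysis with a weighting function that acts as a dual certificate. With $a=1-\rho_q$, let $g(x)=\ln(1-x)/\ln\rho_q$ on $[0,a]$ and $g=1$ on $[a,1]$. Convexity of $g$ on $[0,a]$, together with $g(0)=0$, $g(\rho_q)=1/q$ and an extreme-point argument, shows that every column summing to $1$ has $g$-sum at most $1+1/q$. Suppose all row products were below $\rho_q$. Then every row would have $g$-sum at least $1$. Since $a>1/2$, each column has at most one entry at least $a$, so some row has no such entry, and its $g$-sum exceeds $1$. Summing over rows gives more than $q+1$, contradicting the column total $q(1+1/q)=q+1$.

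Step~1 also has its perturbation backwards. Moving mass in column $j$ away from a row of product $m$ decreases $p_{ij}$ and therefore increases that row's product, so $\Psi$ goes up, not down. Your resulting picture of tight rows plus empty rows is also inconsistent: an all-zero row has product $1>m$ and would itself be the maximum.

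The correct move is the reverse. The displayed array gives $m\le\rho_q<1$, so a row with product below $m$ has some positive entry $p_{i'j}$. Shift a little of it to a tight row $i$ in the same column. This strictly lowers row $i$'s product, because all of its factors are positive; here $m>0$ since at most $q$ rows can contain an entry equal to $1$. This argument does show that all rows are tight, but that is the easy part of the lemma.
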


\begin{proof}
For $q = 1$, we have $\rho_1 = 1/2$. Since $p_{01} + p_{11} = 1$, one of
$p_{01}$ and $p_{11}$ is at most $1/2$. Hence one of $1 - p_{01}$ and
$1 - p_{11}$ is at least $1/2$. Equality is attained when
$p_{01} = p_{11} = 1/2$.

Assume now that $q \geq 2$. Let $F(x)=(1-x)^q-x$ and
$a=1-\rho_q$. Then $\rho_q$ is the unique zero of $F$ in $(0,1)$. Since
$F(1/2) = 2^{-q} - 1/2 < 0$ and $F(0) > 0$, the root satisfies $\rho_q < 1/2$, and
so $a > 1/2$. Also we have $a^q = (1 - \rho_q)^q = \rho_q$. Let
\[
 g(x) =
 \begin{cases}
  \ln(1 - x)/\ln\rho_q, & 0 \leq x \leq a,\\
  1, & a \leq x \leq 1.
 \end{cases}
\]
The two definitions agree at $x = a$, since $1 - a = \rho_q$. On $[0,a]$, the
function $g$ is increasing and convex, $g(0) = 0$, and
$g(\rho_q) = \ln(1 - \rho_q)/\ln\rho_q = \ln a/\ln a^q = 1/q$.

We first claim that for any $x_0,\ldots,x_q \in [0,1]$ with
$\sum_{i = 0}^q x_i = 1$, the following holds.
\begin{equation}\label{eq:column-bound}
        \sum_{i = 0}^q g(x_i) \leq 1 + \frac{1}{q}.
\end{equation}
We consider the following two cases.

\medskip
\noindent
\textbf{Case 1.} There is $i \in \{0,\ldots,q\}$ such that $x_i \geq a$.
\medskip

We may relabel the coordinates and assume that $x_0 \geq a$. Let
$m = 1 - x_0$. Then
$g(x_0) = 1$, $0 \leq m \leq 1 - a = \rho_q < a$, and
$\sum_{i = 1}^q x_i = m$. If $m = 0$,
then $\sum_{i = 0}^q g(x_i) = 1$. Assume $m > 0$. For
$1 \leq i \leq q$,
\[
        x_i = \frac{x_i}{m}m + \left(1 - \frac{x_i}{m}\right)0.
\]
By the convexity of $g$ on $[0,a]$ and by $g(0) = 0$, we have
\[
        g(x_i) \leq
        \frac{x_i}{m}g(m) + \left(1 - \frac{x_i}{m}\right)g(0)
 = \frac{x_i}{m}g(m).
\]
Thus
\[
        \sum_{i = 0}^q g(x_i)
 = 1 + \sum_{i = 1}^q g(x_i)
 \leq 1 + g(m)
 \leq 1 + g(\rho_q)
 = 1 + \frac{1}{q}.
\]

\medskip
\noindent
\textbf{Case 2.} $x_i < a$ for every $i \in \{0,\ldots,q\}$.
\medskip

Let $h(z_0,\ldots,z_q) = \sum_{i = 0}^q g(z_i)$, and consider the set
\[
K = \left\{(z_0,\ldots,z_q) : 0 \leq z_i \leq a,
      \sum_{i = 0}^q z_i = 1\right\}.
\]
Since $g$ is convex on $[0,a]$,
the function $h$ is convex on $K$. Hence the maximum of $h$ on $K$ is attained
at an extreme point of $K$.

We first describe the extreme points of $K$.

Let $\mathbf{w} = (w_0,\ldots,w_q) \in K$ be an extreme point. First, $\mathbf{w}$
has at most one coordinate in $(0,a)$. Indeed, by relabeling, assume that
$w_0,w_1 \in (0,a)$. Choose $\epsilon > 0$ small enough that both
\[
\mathbf{w}' = (w_0 - \epsilon,w_1 + \epsilon,w_2,\ldots,w_q)
\quad\text{and}\quad
\mathbf{w}'' = (w_0 + \epsilon,w_1 - \epsilon,w_2,\ldots,w_q)
\]
belong to $K$. Moreover,
\[
\mathbf{w} = \frac{1}{2}\mathbf{w}' + \frac{1}{2}\mathbf{w}''.
\]
This is impossible for an extreme point of $K$. Hence $\mathbf{w}$ has at most
one coordinate in $(0,a)$.

Since $a > 1/2$ and the coordinates of $\mathbf{w}$ sum to $1$, at most one
coordinate can be equal to $a$. We next show that some coordinate must be
equal to $a$. Assume this is not the case. Since every coordinate lies in
$[0,a]$ and none is equal to $a$, every coordinate is either $0$ or lies in
$(0,a)$. Since there is at most one coordinate in $(0,a)$, at most one
coordinate is positive. But the coordinates sum to $1$, so this positive
coordinate would have to be $1$. This is impossible, because every coordinate
is at most $a < 1$.

Thus exactly one coordinate of $\mathbf{w}$ is equal to $a$. We may relabel
the coordinates and assume that $w_0 = a$. Then
\[
\sum_{i = 1}^q w_i = 1 - a = \rho_q.
\]
No coordinate among $w_1,\ldots,w_q$ can be equal to $a$, since $w_0 = a$
and at most one coordinate can equal $a$. Also, at most one of
$w_1,\ldots,w_q$ can be positive. Otherwise two of them would lie in
$(0,a)$. Hence exactly one of $w_1,\ldots,w_q$ is positive. Its value must
be $1 - a$, and all the other coordinates are $0$. Therefore
$\mathbf{w}$ is a permutation of $(a,1 - a,0,\ldots,0)$.

Thus every extreme point of $K$ has this form. Since the maximum of $h$ on
$K$ is attained at an extreme point, we have
\[
 \sum_{i = 0}^q g(x_i)
 \leq g(a) + g(1 - a)
 = g(a) + g(\rho_q)
 = 1 + \frac{1}{q}.
\]
We have proved Inequality~\eqref{eq:column-bound}.

\medskip

Now suppose, for a contradiction, that
$\prod_{j = 1}^q(1 - p_{ij}) < \rho_q$ for every $i \in \{0,\ldots,q\}$. Let
$S_i = \sum_{j = 1}^q g(p_{ij})$. If row $i$ contains an entry at least $a$, then
$S_i \geq 1$. If it contains no entry at least $a$, then all its entries lie
in $[0,a]$. Since $\ln\rho_q < 0$ and
$\prod_{j = 1}^q(1 - p_{ij}) < \rho_q$, we have
\[
        S_i = \frac{\ln\prod_{j = 1}^q(1 - p_{ij})}{\ln\rho_q} > 1.
\]
Also, $a > 1/2$, so each column contains at most one entry at least $a$. There are only
$q$ columns but $q + 1$ rows, so at least one row contains no entry at
least $a$. Hence at least one $S_i$ is strictly larger than one, while all
the others are at least one. Therefore $\sum_{i = 0}^q S_i > q + 1$.

On the other hand, by Inequality~\eqref{eq:column-bound} for each column of $P$,
we have
\[
        \sum_{i = 0}^q S_i =
        \sum_{j = 1}^q \sum_{i = 0}^q g(p_{ij})
 \leq q\left(1 + \frac{1}{q}\right) = q + 1,
\]
a contradiction. Thus some row satisfies
Inequality~\eqref{eq:allocation}.

For the array stated in the lemma, the product is
$(1 - \rho_q)^q = \rho_q$ in row $0$ and $\rho_q$ in each row
$i \in [q]$. Hence the
constant cannot be improved.
\end{proof}

\begin{proof}[Proof of Lemma~\ref{lem:robust-array}]
The column sums of the array $X = (x_{ij})$ may be slightly larger than one.
We first rescale the columns so that their sums are at
most one, and then add nonnegative entries to make each column sum to one. For each $i \in \{0,1,\ldots,q\}$ and $j \in [q]$, set
\[
p_{ij}=\frac{x_{ij}}{1+\xi_j}.
\]
For each $j\in[q]$, let
\[
s_j=\sum_{i=0}^q p_{ij},
\quad
r_{ij}=
\begin{cases}
	1-s_j, & i=0,\\
	0, & 1\leq i\leq q,
\end{cases}
\]
and set $\widehat p_{ij}=p_{ij}+r_{ij}$ for every
$i\in\{0,1,\ldots,q\}$ and $j\in[q]$.
Then $0 \leq p_{ij} \leq 1$ and $s_j \leq 1$, and
$\widehat P = (\widehat p_{ij})$ has nonnegative entries, every column of
$\widehat P$ sums to one, and $\widehat p_{ij} \geq p_{ij}$ for all $i, j$. By
Lemma~\ref{lem:exact-array}, there is a row $i$ such that
\begin{equation}\label{eq:p-product-lower}
        \prod_{j = 1}^q(1 - p_{ij})
 \geq
        \prod_{j = 1}^q(1 - \widehat p_{ij})
 \geq \rho_q.
\end{equation}
Since
$x_{ij} = (1 + \xi_j)p_{ij}$, we have
$1 - x_{ij} = 1 - p_{ij} - \xi_j p_{ij}
\geq 1 - p_{ij} - \xi_j$. Also $1 - x_{ij} \geq 0$. Hence
$1 - x_{ij} \geq \max\{1 - p_{ij} - \xi_j,0\}$.
By Lemma~\ref{lem:product-estimate}, with $x_j = 1 - p_{ij}$ and
$y_j = \xi_j$, and by Inequality~\eqref{eq:p-product-lower}, we have
\[
        \prod_{j = 1}^q(1 - x_{ij})
 \geq
        \prod_{j = 1}^q\max\{1 - p_{ij} - \xi_j,0\}
 \geq
        \prod_{j = 1}^q(1 - p_{ij}) -
        \sum_{j = 1}^q \xi_j
 \geq
        \rho_q -
        \sum_{j = 1}^q \xi_j.
\]
This completes the proof.
\end{proof}

\paragraph{Acknowledgements.}
This work was initiated during the first author's visit to the second author at
Zhejiang Normal University in February 2026. The first author thanks the
second author for his warm hospitality during the visit. C.F. was supported by
the National Natural Science Foundation for Young Scientists of China (Grant
No.~12301435). R.X. was supported by the National Natural Science Foundation
for Young Scientists of China (Grant No.~12401472) and the Zhejiang Provincial
Natural Science Foundation of China (Grant No.~LQN25A010011).

\paragraph{Declaration of generative AI use.} 

During the preparation of this work, the authors used AI tools to assist with probabilistic and asymptotic calculations, exploring possible proof strategies, particularly for Lemma~\ref{lem:robust-array}, and final proofreading. All AI-assisted calculations and suggestions were independently verified and revised by the authors. The authors take full responsibility for the content of the paper.


\begingroup
\small
\begin{thebibliography}{99}

\bibitem{Alon1992}
N.~Alon,
\newblock Choice numbers of graphs: a probabilistic approach,
\newblock \emph{Combin. Probab. Comput.} 1 (1992), 107--114.

\bibitem{Alon2000}
N.~Alon,
\newblock Degrees and choice numbers,
\newblock \emph{Random Structures Algorithms} 16 (2000), 364--368.

\bibitem{BMS2015}
J.~Balogh, R.~Morris and W.~Samotij,
\newblock Independent sets in hypergraphs,
\newblock \emph{J. Amer. Math. Soc.} 28 (2015), 669--709.

\bibitem{BonamyKang2017}
M.~Bonamy and R.~J. Kang,
\newblock List colouring with a bounded palette,
\newblock \emph{J. Graph Theory} 84 (2017), 93--103.

\bibitem{BondyMurty2008}
J.~A. Bondy and U.~S.~R. Murty,
\newblock \emph{Graph Theory},
\newblock Graduate Texts in Mathematics, vol.~244, Springer, London, 2008.

\bibitem{EKT2019}
L.~Esperet, R.~J. Kang and S.~Thomass\'e,
\newblock Separation choosability and dense bipartite induced subgraphs,
\newblock \emph{Combin. Probab. Comput.} 28 (2019), 720--732.

\bibitem{ERT1979}
P.~Erd\H{o}s, A.~L. Rubin and H.~Taylor,
\newblock Choosability in graphs,
\newblock in \emph{Proceedings of the West Coast Conference on Combinatorics, Graph Theory and Computing}, Congressus Numerantium XXVI (1979), 125--157.

\bibitem{FKK2014}
Z.~F\"uredi, A.~Kostochka and M.~Kumbhat,
\newblock Choosability with separation of complete multipartite graphs and hypergraphs,
\newblock \emph{J. Graph Theory} 76 (2014), 129--137.

\bibitem{GuJiangWoodZhu2023}
Y.~Gu, Y.~Jiang, D.~R. Wood and X.~Zhu,
\newblock Refined list version of Hadwiger's conjecture,
\newblock \emph{SIAM J. Discrete Math.} 37 (2023), 1738--1750.

\bibitem{GuZhu2023}
Y.~Gu and X.~Zhu,
\newblock Girth and $\lambda$-choosability of graphs,
\newblock \emph{J. Graph Theory} 103 (2023), 493--501.

\bibitem{Kang2013}
R.~J. Kang,
\newblock Improper choosability and Property B,
\newblock \emph{J. Graph Theory} 73 (2013), 342--353.

\bibitem{KralSgall2005}
D.~Kr\'al' and J.~Sgall,
\newblock Coloring graphs from lists with bounded size of their union,
\newblock \emph{J. Graph Theory} 49 (2005), 177--186.

\bibitem{KTV1998Brooks}
J.~Kratochv{\'\i}l, Zs.~Tuza and M.~Voigt,
\newblock Brooks-type theorems for choosability with separation,
\newblock \emph{J. Graph Theory} 27 (1998), 43--49.

\bibitem{KTV1998Complexity}
J.~Kratochv{\'\i}l, Zs.~Tuza and M.~Voigt,
\newblock Complexity of choosing subsets from color sets,
\newblock \emph{Discrete Math.} 191 (1998), 139--148.

\bibitem{ST2015}
D.~Saxton and A.~Thomason,
\newblock Hypergraph containers,
\newblock \emph{Invent. Math.} 201 (2015), 925--992.

\bibitem{Vizing1976}
V.~G. Vizing,
\newblock Coloring the vertices of a graph in prescribed colors,
\newblock \emph{Diskret. Analiz} 29 (1976), 3--10 (in Russian).

\bibitem{Zhu2020}
X.~Zhu,
\newblock A refinement of choosability of graphs,
\newblock \emph{J. Combin. Theory Ser. B} 141 (2020), 143--164.

\bibitem{ZhuZhu2021}
J.~Zhu and X.~Zhu,
\newblock Chromatic $\lambda$-choosable and $\lambda$-paintable graphs,
\newblock \emph{J. Graph Theory} 98 (2021), 642--652.

\bibitem{ZhuZhu2025}
J.~Zhu and X.~Zhu,
\newblock Minimum non-chromatic-$\lambda$-choosable graphs,
\newblock \emph{J. Graph Theory} 110 (2025), 283--289.

\end{thebibliography}
\endgroup
\end{document}